\newtheorem{theorem}{Theorem}[section]
\newtheorem{corollary}{Corollary}[section]
\newtheorem{definition}{Definition}[section]
\newtheorem{example}{Example}[section]
\newtheorem{remark}{Remark}[section]
\numberwithin{equation}{section}
\begin{document}
\date{{\scriptsize Received: , Accepted: .}}
\title[Simulation Functions ]{Geometric Properties of Fixed Points and
Simulation Functions }
\subjclass[2010]{Primary 54H25; Secondary 47H09, 47H10.}
\keywords{Fixed circle, fixed disc, fixed ellipse, simulation function.}
\author[N.\ \"{O}ZG\"{U}R]{Nihal \"{O}ZG\"{U}R$^*$}
\address{Bal\i kesir University, Department of Mathematics, 10145 Bal\i
kesir, Turkey}
\email{nihal@balikesir.edu.tr}
\author[N. TA\c{S}]{Nihal TA\c{S}}
\address{Bal\i kesir University, Department of Mathematics, 10145 Bal\i
kesir, Turkey}
\email{nihaltas@balikesir.edu.tr}
\thanks{$^{\ast }$Corresponding author: Bal\i kesir University, Department
of Mathematics, 10145 Bal\i kesir, TURKEY, nihal@balikesir.edu.tr}
\maketitle

\begin{abstract}
Geometric properties of the fixed point set $Fix(f)$ of a self-mapping $f$
on a metric or a generalized metric space is an attractive issue. The set $%
Fix(f)$ can contain a geometric figure (a circle, an ellipse, etc.) or it
can be a geometric figure. In this paper, we consider the set of simulation
functions for geometric applications in the fixed point theory both on
metric and some generalized metric spaces ($S$-metric spaces and $b$-metric
spaces). The main motivation of this paper is to investigate the geometric
properties of non unique fixed points of self-mappings via simulation
functions.
\end{abstract}

\bigskip




%

%

\section{\textbf{Introduction}}

\label{intro} Recently, the set of simulation functions defined in \cite%
{Khojasteh} has been used for the solutions of many recent problems such as
fixed-circle problem (resp. fixed-disc problem) and Rhoades' open problem on
discontinuity (see \cite{Mlaiki 2019, Ozgur-simulation, Pant 2020}). On the
other hand, simulation functions have been studied by various aspects in
metric fixed-point theory (see for example \cite{Chanda 2019, Karapinar
2017, Khojasteh, Kostic 2019, Olgun 2016, Roldan, Roldan 2015}). For
example, in \cite{Olgun 2016}, Olgun et al. gave a new class of Picard
operators on complete metric spaces via simulation functions. Simulation
functions have been used to study the best proximity points in metric
spaces. For example, Kosti\'{c} et al. presented several best proximity
point results involving simulation functions (for more details see \cite%
{Karapinar 2017, Kostic 2019} and the references therein).

In \cite{Roldan}, the set of simulation functions has been enlarged by A. F.
Rold\'{a}n-L\'{o}pez-de-Hierro et al. Every simulation function in the
original Khojasteh et al.'s sense is also a simulation function in A. F. Rold%
\'{a}n-L\'{o}pez-de-Hierro et al.'s sense but the converse is not true (see
\cite{Roldan} for more details). In this paper, we focus on the set of
simulation functions in both sense and using their properties, we consider
some recent problems in fixed-point theory.

Recall that the function $\zeta :[0,\infty )\times \lbrack 0,\infty
)\rightarrow
\mathbb{R}
$ is said to be a simulation function in the Khojasteh et al.'s sense, if
the following hold:

$(\zeta _{1})$ $\zeta (0,0)=0,$

$(\zeta _{2})$ $\zeta (t,s)<s-t$ for all $s,t>0$,

$(\zeta _{3})$ If $\{t_{n}\}$, $\{s_{n}\}$ are sequences in $(0,\infty )$
such that%
\begin{equation*}
\underset{n\rightarrow \infty }{\lim }t_{n}=\underset{n\rightarrow \infty }{%
\lim }s_{n}>0\text{,}
\end{equation*}%
then%
\begin{equation*}
\underset{n\rightarrow \infty }{\lim \sup }\zeta (t_{n},s_{n})<0\text{.}
\end{equation*}

The set of all the simulation functions is denoted by $\mathcal{Z}$ (see
\cite{Chanda 2019} and \cite{Khojasteh} for more details).

In \cite{Roldan}, Rold\'{a}n-L\'{o}pez-de-Hierro et al. modified this
definition of simulation functions and so enlarged the family of all
simulation functions. To do this, only the condition $(\zeta _{3})$ was
replaced by the following condition $(\zeta _{3})^{\ast }$ as follows:

$(\zeta _{3})^{\ast }$ If $\{t_{n}\}$, $\{s_{n}\}$ are sequences in $%
(0,\infty )$ such that%
\begin{equation*}
\underset{n\rightarrow \infty }{\lim }t_{n}=\underset{n\rightarrow \infty }{%
\lim }s_{n}>0\text{ }
\end{equation*}%
and
\begin{equation*}
t_{n}<s_{n}\text{ for all }n\in
\mathbb{N}
\text{,}
\end{equation*}%
then%
\begin{equation*}
\underset{n\rightarrow \infty }{\lim \sup }\zeta (t_{n},s_{n})<0\text{.}
\end{equation*}

Every simulation function in the Khojasteh et al.'s sense is also a
simulation function in the Rold\'{a}n-L\'{o}pez-de-Hierro et al.'s sense,
the converse is not true (for example see Example 3.3 in \cite{Roldan}). In
applications of the simulation functions to the study of discontinuity
problem and the geometric study of the non unique fixed points of
self-mappings, the condition $(\zeta _{3})$ is not used. So, both
definitions of the simulation functions and examples can be used to study
such kind applications. Some examples of simulation functions $\zeta
:[0,\infty )\times \lbrack 0,\infty )\rightarrow
\mathbb{R}
$ are

$1)$ $\zeta (t,s)=\lambda s-t$, where $\lambda \in \lbrack 0,1)$,

$2)$ $\zeta (t,s)=s-\varphi (s)-t$, where $\varphi :[0,\infty )\rightarrow
\lbrack 0,\infty )$ is a continuous function such that $\varphi (t)=0$ if
and only if $t=0$,

$3)$ $\zeta (t,s)=s\phi (s)-t$, where $\phi :[0,\infty )\rightarrow \lbrack
0,1)$ is a mapping such that $\underset{t\rightarrow r^{+}}{\lim \sup \phi
(t)}<1$ for all $r>0$,

$4)$ $\zeta (t,s)=\eta (s)-t$, where $\varphi :[0,\infty )\rightarrow
\lbrack 0,\infty )$ be an upper semi-continuous mapping such that $\eta
(t)<t $ for all $t>0$ and $\eta (0)=0$,

$5)$ $\zeta (t,s)=s-\int\limits_{0}^{t}\psi (t)dt$, where $\psi :[0,\infty
)\rightarrow \lbrack 0,1)$ is a function such that $\int\limits_{0}^{t}\psi
(t)dt$ exists and $\int\limits_{0}^{\varepsilon}\psi (u)du>\varepsilon $ for
each $\varepsilon >0$.

The main motivation of this paper is the study of the geometric properties
of the fixed point set of a self-mapping in the non unique fixed point case.
There are some examples of self-mappings where the fixed point set of the
self-mapping contains a geometric figure such as a circle, a disc or an
ellipse. For example, let us consider the metric space $\left(
\mathbb{C}
,d\right) $ with the metric defined for the complex numbers $%
z_{1}=x_{1}+iy_{1}$ and $z_{2}=x_{2}+iy_{2}$ as follows:
\begin{equation*}
d\left( z_{1},z_{2}\right) =\sqrt{\frac{\left( x_{1}-x_{2}\right) ^{2}}{9}%
+4\left( y_{1}-y_{2}\right) ^{2}}\text{,}
\end{equation*}%
where $d$ is the metric induced by the norm function $\left\Vert
z\right\Vert =\left\Vert x+iy\right\Vert =$ $\sqrt{\frac{x^{2}}{9}+4y^{2}}$.
Consider the circle $C_{0,1}$ and define the self-mapping $g$ on $\mathbb{C}$
by
\begin{equation*}
gz=\left\{
\begin{array}{ccc}
z & ; & x\geq 0,y\geq 0\text{ or }x\leq 0,y\leq 0 \\
\frac{36z}{-35(z^{2}+\overline{z}^{2})+74z\overline{z}} & ; & x<0,y>0\text{
or }x>0,y<0%
\end{array}%
\right. \text{,}
\end{equation*}%
for each $z=x+iy\in \mathbb{C}$, where $\overline{z}=x-iy$ is the complex
conjugate of $z$. Then, it is easy to verify that the fixed point set of $g$
contains the circle $C_{0,1}$, that is, $C_{0,1}$ is a fixed circle of $g$.
We use the notion of \textquotedblleft inversion in an
ellipse\textquotedblright\ to construct this self-mapping (see Proposition 1
given in \cite{Ramirez}).

There are several papers for the cases a fixed circle and a fixed disc (see
\cite{Aydi 2019, Mlaiki 2018, Mlaiki arxiv, Ozgur-malaysian,
Ozgur-simulation, Pant Ozgur Tas 2019, Pant Ozgur Tas accepted,
Tas-Ozgur-Mlaiki-2, Tas 2020} and the references therein). The fixed ellipse
case is also considered in the recent studies \cite{Ercinar} and \cite{Joshi}%
. In \cite{Ercinar}, the cases a fixed Apollonius circle, fixed hyperbola
and fixed Cassini curve are considered extensively on metric and some
generalized metric spaces. Therefore, the study of geometric properties of
the fixed point set of a self-mapping seems to be an interesting problem in
case where the fixed point is non unique. In this paper, we study on the
geometric properties of the fixed point set of a self-mapping via simulation
functions on metric (resp. $S$-metric and $b$-metric) spaces. The
relationships among a metric, an $S$-metric and a $b$-metric are well known,
so we refer the reader to \cite{Ozgur 2017}, \cite{Sedghi 2012} and \cite%
{Sedghi 2014} for more details.

\section{\textbf{Simulation functions and the geometry of fixed points}}

\label{sec:1}

In this section, we study on geometric properties of the fixed point set $%
Fix(f)=\left\{ x\in X:fx=x\right\} $ for a self-mapping $f$ of a metric
(resp. $S$-metric, $b$-metric) space. The set of simulation functions has
been used in \cite{Mlaiki 2019}, \cite{Ozgur-simulation} and \cite{Pant 2020}
to obtain new results on the fixed-circle (resp. fixed-disc) problem. In
\cite{Pant 2020}, together with some properties of simulation functions, the
numbers $M(x,y)$ and $\rho $ defined by%
\begin{equation}
M(x,y)=\max \left\{
\begin{array}{c}
ad(x,fx)+(1-a)d(y,fy), \\
(1-a)d(x,fx)+ad(y,fy),\frac{d(x,fy)+d(y,fx)}{2}%
\end{array}%
\right\} ,\text{ }0\leq a<1\text{,}  \label{definition of M}
\end{equation}%
\begin{equation}
\rho =\inf \left\{ d(x,fx):x\neq fx,x\in X\right\} \text{,}  \label{number}
\end{equation}%
and an auxiliary function $\varphi :%
\mathbb{R}
^{+}\rightarrow
\mathbb{R}
^{+}$ satisfying $\varphi (t)<t$ for each $t>0$, were used to get new
fixed-circle (resp. fixed-disc) results. Using these numbers and an
auxiliary function, we present new results on the geometric study of the
fixed point set of a self-mapping.

\subsection{\textbf{Geometric study of fixed points on metric spaces}}

Let $(X,d)$ be a metric space and $f:X\rightarrow X$ be a self-mapping.
First, we recall that the circle $C_{x_{0},\rho }=\left\{ x\in X:d\left(
x,x_{0}\right) =\rho \right\} $ (resp. the disc $D_{x_{0},\rho }=\left\{
x\in X:d\left( x,x_{0}\right) \leq \rho \right\} $) is a fixed circle (resp.
a fixed disc) of $f$ if $fx=x$ for all $x\in C_{x_{0},\rho }$ (resp. for all
$x\in D_{x_{0},\rho }$) (see \cite{Ozgur-malaysian}, \cite{Ozgur-simulation}%
). More generally, a geometric figure $\mathcal{F}$ (a circle, an ellipse, a
hyperbola, a Cassini curve etc.) contained in the fixed point set $Fix\left(
f\right) $ is called a\textit{\ fixed figure} (a fixed circle, a fixed
ellipse, a fixed hyperbola, a fixed Cassini curve, etc.) of the self-mapping
$f$.

Let $E_{r}(x_{1},x_{2})$ be the ellipse defined as
\begin{equation*}
E_{r}(x_{1},x_{2})=\left\{ x\in X:d\left( x,x_{1}\right) +d\left(
x,x_{2}\right) =r\right\} \text{.}
\end{equation*}%
Clearly, we have
\begin{equation*}
r=0\Rightarrow x_{1}=x_{2}\text{ and }E_{r}(x_{1},x_{2})=C_{x_{1},r}=\left\{
x_{1}\right\} .
\end{equation*}

Now, we use the set of simulation functions and the number $\rho $ to obtain
some results for the case where the fixed point set $Fix(f)$ contains an
ellipse or an ellipse with its interior.

\begin{theorem}
\label{thm21} Let $(X,d)$ be a metric space, $f:X\rightarrow X$ be a
self-mapping, $\zeta \in \mathcal{Z}$ be a simulation function and the
number $\rho $ be defined as in $($\ref{number}$)$. If there exist some
points $x_{1},x_{2}\in X$ such that

$(a)$ For all $x\in E_{\rho }(x_{1},x_{2})$, there exists $\delta (\rho )>0$
satisfying%
\begin{equation*}
\frac{\rho }{2}\leq M(x,x_{1})+M(x,x_{2})<\frac{\rho }{2}+\delta (\rho
)\Longrightarrow d(fx,x_{1})+d(fx,x_{2})\leq \rho \text{,}
\end{equation*}

$(b)$ For all $x\in X$,%
\begin{equation*}
d(fx,x)>0\Longrightarrow \zeta \left( d(fx,x),M(x,x_{1})\right) \geq 0\text{
and }\zeta \left( d(fx,x),M(x,x_{2})\right) \geq 0\text{,}
\end{equation*}

$(c)$ For all $x\in X$,%
\begin{equation*}
d(fx,x)>0\Longrightarrow \zeta \left( d(fx,x),\frac{%
d(x,x_{1})+d(fx,x_{1})+d(x,x_{2})+d(fx,x_{2})}{2}\right) \geq 0\text{,}
\end{equation*}%
then $fx_{1}=x_{1}$, $fx_{2}=x_{2}$ and $Fix(f)$ contains the ellipse $%
E_{\rho }(x_{1},x_{2})$.
\end{theorem}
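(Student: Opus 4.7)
The plan is to proceed in three stages: first establish that the foci $x_1,x_2$ are themselves fixed, then analyze the quantity $M(x,x_i)$ for a generic point of the ellipse, and finally combine (c) and (a) to obtain a contradiction under the assumption $fx\neq x$. For the first stage, I would apply (b) at $x=x_i$ for $i=1,2$. Substituting into (\ref{definition of M}) directly gives $M(x_i,x_i)=d(x_i,fx_i)$, so if $fx_i\neq x_i$, then $d(x_i,fx_i)>0$ and (b) forces $\zeta\bigl(d(x_i,fx_i),d(x_i,fx_i)\bigr)\geq 0$, contradicting $(\zeta_2)$. Hence $fx_1=x_1$ and $fx_2=x_2$.

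Next I would fix $x\in E_{\rho}(x_1,x_2)$ and suppose $fx\neq x$ toward a contradiction; then $d(fx,x)\geq \rho>0$ by (\ref{number}). Since $fx_i=x_i$, the quantity $M(x,x_i)$ collapses to
\[M(x,x_i)=\max\Bigl\{a\,d(x,fx),\;(1-a)\,d(x,fx),\;\tfrac{d(x,x_i)+d(x_i,fx)}{2}\Bigr\}.\]
Property $(\zeta_2)$ applied to (b) forces $M(x,x_i)>d(fx,x)$; since $a,1-a\leq 1$ the first two entries are each at most $d(fx,x)$, so the third must dominate, yielding $d(x,x_i)+d(x_i,fx)>2d(fx,x)\geq 2\rho$ for $i=1,2$. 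Adding these and using $d(x,x_1)+d(x,x_2)=\rho$ gives $d(fx,x_1)+d(fx,x_2)>3\rho$. Next, hypothesis (c) together with $(\zeta_2)$ yields
\[2d(fx,x)<\rho+d(fx,x_1)+d(fx,x_2),\]
which reconfirms the strict lower bound $d(fx,x_1)+d(fx,x_2)>\rho$. The final move is to invoke (a), whose conclusion $d(fx,x_1)+d(fx,x_2)\leq \rho$ contradicts this lower bound, forcing $fx=x$ for every $x\in E_{\rho}(x_1,x_2)$.

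The part I expect to require the most care is the application of (a): its antecedent demands that $M(x,x_1)+M(x,x_2)$ sit in the narrow interval $[\rho/2,\rho/2+\delta(\rho))$, whereas the crude bound from the previous step suggests this sum can greatly exceed $2\rho$ once $fx\neq x$. The argument therefore seems to hinge on a judicious choice of $\delta(\rho)$ that synchronizes (a) with the outputs of (b) and (c); arranging this calibration so that the implication in (a) is actually triggered under the contradiction hypothesis is the main technical obstacle I anticipate in carrying out the details.
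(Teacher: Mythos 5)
Your first stage and the individual estimates you extract from (b), (c) and $(\zeta _{2})$ are all correct, and up to the last step you are following the same route as the paper: fix the foci via (b), suppose $fx\neq x$ for some $x\in E_{\rho }(x_{1},x_{2})$, and play (a) against (c) to force a contradiction with the definition of $\rho $. The gap is exactly the one you flag at the end, and it cannot be repaired by ``calibrating'' $\delta (\rho )$: condition (a) only asserts that \emph{some} $\delta (\rho )>0$ exists, while your own deduction $M(x,x_{i})>d(fx,x)\geq \rho $ shows that under the hypothesis $fx\neq x$ one has $M(x,x_{1})+M(x,x_{2})>2\rho $, so the antecedent $M(x,x_{1})+M(x,x_{2})<\frac{\rho }{2}+\delta (\rho )$ simply fails for any $\delta (\rho )\leq \frac{3\rho }{2}$, the implication in (a) is then vacuous, and the conclusion $d(fx,x_{1})+d(fx,x_{2})\leq \rho $ on which your contradiction rests is unavailable. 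As written, your argument therefore does not close.

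You should know, however, that the paper's own proof does not close this gap either: it restates the implication of (a) and then, in the displayed chain, silently uses its conclusion $d(fx,x_{1})+d(fx,x_{2})\leq \rho $ without verifying the antecedent, combining it with (c) and $(\zeta _{2})$ to obtain $d(fx,x)<\rho $, which contradicts $(\ref{number})$. If you grant yourself the same reading of (a) --- that $d(fx,x_{1})+d(fx,x_{2})\leq \rho $ holds for every $x\in E_{\rho }(x_{1},x_{2})$ --- then your inequality $2d(fx,x)<\rho +d(fx,x_{1})+d(fx,x_{2})\leq 2\rho $ is the paper's contradiction rearranged, and your stronger bound $d(fx,x_{1})+d(fx,x_{2})>3\rho $ obtained from (b) would finish even faster. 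Two minor points: the case $\rho =0$ should be disposed of separately (then $E_{\rho }(x_{1},x_{2})\subseteq \{x_{1}\}$ and your first stage already covers it), and your use of $(\zeta _{2})$ to get $M(x,x_{i})>d(fx,x)$ needs $M(x,x_{i})>0$, which holds because $M(x,x_{i})\geq (1-a)d(x,fx)>0$ with $a<1$.
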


\begin{proof}
We have
\begin{equation*}
M(x_{1},x_{1})=d(fx_{1},x_{1})\text{ and }M(x_{2},x_{2})=d(fx_{2},x_{2})%
\text{.}
\end{equation*}%
First, we show that $fx_{1}=x_{1}$ and $fx_{2}=x_{2}$. If $fx_{1}\neq x_{1}$
and $fx_{2}\neq x_{2}$ then $d(fx_{1},x_{1})>0$ and $d(fx_{2},x_{2})>0$.
Using the condition $(\zeta _{2})$, we find
\begin{eqnarray*}
\zeta \left( d(fx_{1},x_{1}),M(x_{1},x_{1})\right) &=&\zeta \left(
d(fx_{1},x_{1}),d(fx_{1},x_{1})\right) \\
&<&d(fx_{1},x_{1})-d(fx_{1},x_{1})=0
\end{eqnarray*}%
and%
\begin{eqnarray*}
\zeta \left( d(fx_{2},x_{2}),M(x_{2},x_{2})\right) &=&\zeta \left(
d(fx_{2},x_{2}),d(fx_{2},x_{2})\right) \\
&<&d(fx_{2},x_{2})-d(fx_{2},x_{2})=0\text{,}
\end{eqnarray*}%
which are contradictions with the condition $(b)$. Hence it should be $%
fx_{1}=x_{1}$ and $fx_{2}=x_{2}$.

If $\rho =0$, then we have $E_{\rho }(x_{1},x_{2})=C_{x_{1},\rho }=\{x_{1}\}$
\ and $x_{1}=x_{2}$. Hence, the proof is completed.

Now we assume $\rho \neq 0$. Let $x\in E_{\rho }(x_{1},x_{2})$ be any point
such that $fx\neq x$. Then $d(x,fx)>0$ and we have%
\begin{equation*}
M(x,x_{1})=\max \left\{ ad(x,fx),(1-a)d(x,fx),\frac{d(x,x_{1})+d(x_{1},fx)}{2%
}\right\}
\end{equation*}%
and
\begin{equation*}
M(x,x_{2})=\max \left\{ ad(x,fx),(1-a)d(x,fx),\frac{d(x,x_{2})+d(x_{2},fx)}{2%
}\right\} \text{.}
\end{equation*}%
Using the condition $(a)$, we get%
\begin{equation}
\frac{\rho }{2}\leq M(x,x_{1})+M(x,x_{2})<\frac{\rho }{2}+\delta (\rho
)\Longrightarrow d(fx,x_{1})+d(fx,x_{2})\leq \rho \text{.}  \label{eqn21}
\end{equation}%
Now, using the inequality (\ref{eqn21}) and the conditions $(c)$, $(\zeta
_{2})$, we obtain%
\begin{eqnarray*}
0 &\leq &\zeta \left( d(fx,x),\frac{%
d(x,x_{1})+d(fx,x_{1})+d(fx,x_{2})+d(x,x_{2})}{2}\right) \\
&<&\frac{d(x,x_{1})+d(fx,x_{1})+d(fx,x_{2})+d(x,x_{2})}{2}-d(fx,x) \\
&=&\frac{d(x,x_{1})+d(x,x_{2})}{2}+\frac{d(fx,x_{1})+d(fx,x_{2})}{2}-d(fx,x)
\\
&\leq &\frac{\rho }{2}+\frac{\rho }{2}-d(fx,x)=\rho -d(fx,x)
\end{eqnarray*}%
and hence%
\begin{equation*}
d(fx,x)<\rho \text{.}
\end{equation*}%
This is a contradiction by the definition of the number $\rho $. Because of
this contradiction, it should be $fx=x$. Consequently, we have $E_{\rho
}(x_{1},x_{2})\subset Fix(f)$.
\end{proof}

\begin{remark}
\label{remark21} If $x_{1}=x_{2}$ then we have $E_{\rho
}(x_{1},x_{2})=C_{x_{1},\frac{\rho }{2}}$ and Theorem \ref{thm21} is reduced
to a fixed-circle theorem as follows$:$
\end{remark}

\begin{theorem}
\label{thm22} Let $(X,d)$ be a metric space, $f:X\rightarrow X$ be a
self-mapping, $\zeta \in \mathcal{Z}$ be a simulation function and the
number $\rho $ be defined as in $($\ref{number}$)$. If there exists some
point $x_{0}\in X$ such that

$(a)$ For all $x\in C_{x_{0},\rho }$, there exists $\delta (\rho )>0$
satisfying%
\begin{equation*}
\frac{\rho }{4}\leq M(x,x_{0})<\frac{\rho }{4}+\delta (\rho )\Longrightarrow
d(fx,x_{0})\leq \rho \text{,}
\end{equation*}

$(b)$ For all $x\in X$,%
\begin{equation*}
d(fx,x)>0\Longrightarrow \zeta \left( d(fx,x),M(x,x_{0})\right) \geq 0\text{,%
}
\end{equation*}%
then $fx_{0}=x_{0}$ and the set $Fix(f)$ contains the circle $C_{x_{0},\rho
} $.
\end{theorem}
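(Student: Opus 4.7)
The plan is to mirror the proof of Theorem \ref{thm21}, collapsing the two foci into a single center $x_{0}$ and letting condition $(b)$ play the role previously shared between $(b)$ and $(c)$ in the ellipse setting.

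First I would verify that $x_{0}$ itself is fixed. Inserting $x=x_{0}$ into the definition of $M$ in (\ref{definition of M}) collapses each of the three slots to $d(fx_{0},x_{0})$, so $M(x_{0},x_{0})=d(fx_{0},x_{0})$; if $fx_{0}\neq x_{0}$, then $d(fx_{0},x_{0})>0$ and property $(\zeta _{2})$ would force $\zeta (d(fx_{0},x_{0}),d(fx_{0},x_{0}))<0$, contradicting condition $(b)$. Hence $fx_{0}=x_{0}$. The degenerate case $\rho =0$ reduces to $C_{x_{0},0}=\{x_{0}\}\subset Fix(f)$, so I may assume $\rho >0$ and pick an arbitrary $x\in C_{x_{0},\rho }$. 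Arguing by contradiction, suppose $fx\neq x$, so that $d(x,fx)>0$ and $d(x,x_{0})=\rho $.

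Since $x_{0}$ is now fixed, the expression $M(x,x_{0})$ simplifies to
\[
M(x,x_{0})=\max \left\{ ad(x,fx),\,(1-a)d(x,fx),\,\frac{\rho +d(fx,x_{0})}{2}\right\} .
\]
Condition $(a)$ yields $d(fx,x_{0})\leq \rho $, which bounds the third slot by $\rho $. Combining $(b)$ with $(\zeta _{2})$ gives
\[
0\leq \zeta (d(fx,x),M(x,x_{0}))<M(x,x_{0})-d(fx,x),
\]
whence $d(fx,x)<M(x,x_{0})$. A three-way case split on which slot realises the maximum closes the argument: in the first two slots, with multiplier $\lambda \in \{a,1-a\}\subseteq [0,1]$, the inequality becomes $d(fx,x)<\lambda d(fx,x)\leq d(fx,x)$, an absurdity; in the third slot, the bound from $(a)$ yields $d(fx,x)<(\rho +\rho )/2=\rho $, contradicting the definition of $\rho $ in (\ref{number}).

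The main obstacle is the case analysis of the maximum in $M(x,x_{0})$: one must check that the two multipliers $a$ and $1-a$ behave correctly across the admissible range $a\in [0,1)$, including the boundary $a=0$ where $1-a=1$ but the resulting strict inequality $d(fx,x)<d(fx,x)$ is still absurd. A more subtle point, inherited from the proof of Theorem \ref{thm21}, is that condition $(a)$ is invoked to bound $d(fx,x_{0})$ without separately verifying the premise $\frac{\rho }{4}\leq M(x,x_{0})<\frac{\rho }{4}+\delta (\rho )$ for the particular $x$ at hand; the implication is applied exactly as the analogous implication is applied in the ellipse proof.
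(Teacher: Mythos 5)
Your proof is correct (at the level of rigor the paper itself adopts), but it takes a genuinely different route from the paper's, which disposes of Theorem \ref{thm22} in one line by citing Theorem \ref{thm21} and Remark \ref{remark21}, i.e.\ by setting $x_{1}=x_{2}=x_{0}$. That reduction does not literally reproduce the statement as printed: the substitution gives $E_{\rho }(x_{0},x_{0})=C_{x_{0},\rho /2}$ rather than $C_{x_{0},\rho }$, condition $(a)$ of Theorem \ref{thm21} specializes to a bound $d(fx,x_{0})\leq \rho /2$ rather than $\rho $, and the analogue of condition $(c)$ --- which is what the ellipse proof actually uses to derive $d(fx,x)<\rho $ --- has no counterpart among the hypotheses of Theorem \ref{thm22}. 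Your direct argument sidesteps all of this: after fixing $x_{0}$ you use $(b)$ and $(\zeta _{2})$ to get $d(fx,x)<M(x,x_{0})$ and then split on which slot of the maximum is attained, killing the first two slots by $\lambda \in \{a,1-a\}\subseteq \lbrack 0,1]$ and the third by $(a)$ together with the definition of $\rho $. This proves the theorem exactly as stated from exactly the hypotheses given, which the paper's one-line reduction does not quite do, so your case analysis is the more faithful proof. The one weakness you share with the paper --- invoking the conclusion $d(fx,x_{0})\leq \rho $ of the implication in $(a)$ without checking its premise $\frac{\rho }{4}\leq M(x,x_{0})<\frac{\rho }{4}+\delta (\rho )$ (which in fact can fail, since $M(x,x_{0})\geq \frac{\rho +d(fx,x_{0})}{2}\geq \frac{\rho }{2}$) --- is inherited verbatim from the paper's proof of Theorem \ref{thm21}, and you flag it explicitly, so it is not a new gap introduced by your argument.
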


\begin{proof}
The proof follows by Theorem \ref{thm21} and Remark \ref{remark21}.
\end{proof}

\begin{example}
\label{exm21} Let $X=\left\{ -3,-1,1,3,12,18\right\} $ with the metric $%
d(x,y)=\left\vert x-y\right\vert $. Define the self-mapping $f:X\rightarrow
X $ by
\begin{equation*}
fx=\left\{
\begin{array}{ccc}
x+6 & , & x=12 \\
x & , & x\in \left\{ -3,-1,1,3,18\right\}%
\end{array}%
\right. .
\end{equation*}%
Then the self-mapping $f$ satisfies the conditions of Theorem \ref{thm21}
for the points $x_{1}=-1$ and $x_{2}=1$ and the simulation function $\zeta
(t,s)=\frac{1}{2}s-t$. Indeed, we have
\begin{equation*}
\rho =\min \left\{ d(x,fx):x\in X,x\neq fx\right\} =6
\end{equation*}%
and
\begin{equation*}
E_{6}(-1,1)=\left\{ -3,3\right\} .
\end{equation*}%
For all $x\in E_{6}(-1,1)$, there exists $\delta (\rho )=4>0$ satisfying%
\begin{equation*}
3\leq M(x,-1)+M(3,1)<3+4\Longrightarrow d(fx,-1)+d(fx,1)=6\leq \rho \text{,}
\end{equation*}%
hence the condition $(a)$ is satisfied.

For $x=12$, we have $d(x,fx)\neq 0$, $M(12,-1)=16,$ $M(12,1)=14$ and so, we
obtain
\begin{eqnarray*}
\zeta \left( d(fx,x),M(x,x_{1})\right) &=&\zeta \left( 6,16\right) =\frac{16%
}{2}-6 \\
&=&2>0
\end{eqnarray*}%
and
\begin{eqnarray*}
\zeta \left( d(fx,x),M(x,x_{2})\right) &=&\zeta \left( 6,14\right) =\frac{14%
}{2}-6 \\
&=&1>0.
\end{eqnarray*}%
This shows that the condition $(b)$ is also satisfied by $f$.

Since we have $d(x,fx)>0$ for $x=12$, we find%
\begin{eqnarray*}
\zeta \left( d(fx,x),\frac{d(x,x_{1})+d(fx,x_{1})+d(fx,x_{2})+d(x,x_{2})}{2}%
\right) &=&\zeta \left( 6,30\right) \\
&=&\frac{30}{2}-6=9>0\text{,}
\end{eqnarray*}%
hence the condition $(c)$ is satisfied.

Clearly, we have $Fix(f)=\left\{ -3,-1,1,3,18\right\} $ and the ellipse $%
E_{6}(-1,1)=\left\{ -3,3\right\} $ is contained in the set $Fix(f)$. That
is, the ellipse $E_{6}(-1,1)$ is a fixed ellipse of the self-mapping $f$.

On the other hand, it is easy to check that the self-mapping $f$ satisfies
the conditions of Theorem \ref{thm22} for the point $x_{0}=3$ and the
simulation function $\zeta (t,s)=\frac{2}{3}s-t$. Clearly, the set $Fix(f)$
contains the circle $C_{3,6}=\left\{ -3\right\} $.
\end{example}

\begin{definition}
\label{def21} Let $\zeta \in \mathcal{Z}$ be any simulation function. The
self-mapping $f$ is said to be a $\mathcal{Z}_{E}$-contraction with respect
to $\zeta $ if there exist $x_{1},x_{2}\in X$ such that the following
condition holds for all $x\in X:$%
\begin{equation*}
d(fx,x)>0\Rightarrow \zeta \left( d(fx,x),d(fx,x_{1})+d(fx,x_{2})\right)
\geq 0.
\end{equation*}
\end{definition}

If $f$ is a $\mathcal{Z}_{E}$-contraction with respect to $\zeta $, then we
have%
\begin{equation}
d(fx,x)<d(fx,x_{1})+d(fx,x_{2}),  \label{eqn1}
\end{equation}%
for all $x\in X$ with $fx\neq x_{1}$ or $fx\neq x_{2}$. Indeed, if $fx=x$
then the inequality (\ref{eqn1}) is satisfied trivially. If $fx\neq x$ then $%
d(fx,x)>0$ and by the definition of a $\mathcal{Z}_{E}$-contraction and the
condition $(\zeta _{2})$, we obtain%
\begin{equation*}
0\leq \zeta \left( d(fx,x),d(fx,x_{1})+d(fx,x_{2})\right)
<d(fx,x_{1})+d(fx,x_{2})-d(fx,x)
\end{equation*}%
and so Equation (\ref{eqn1}) is satisfied.

Now we give the following theorem.

\begin{theorem}
\label{thm1} Let $f$ be a $\mathcal{Z}_{E}$-contraction with respect to $%
\zeta $ with $x_{1},x_{2}\in X$ and consider the set
\begin{equation*}
\overline{E}_{\rho }(x_{1},x_{2})=\left\{ x\in X:d\left( x,x_{1}\right)
+d\left( x,x_{2}\right) \leq \rho \right\} \text{.}
\end{equation*}
If the condition $0<d(fx,x_{1})+d(fx,x_{2})\leq \rho $ holds for all $x\in
\overline{E}_{\rho }(x_{1},x_{2})-\left\{ x_{1},x_{2}\right\} $ then $Fix(f)$
contains the set $\overline{E}_{\rho }(x_{1},x_{2})$.
\end{theorem}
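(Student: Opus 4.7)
The plan is to exploit inequality (\ref{eqn1}), which was just derived from the $\mathcal{Z}_{E}$-contraction definition, together with the infimum property of $\rho$ in (\ref{number}), to force $fx=x$ for every $x\in\overline{E}_{\rho}(x_{1},x_{2})$.

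First, I would fix an arbitrary $x\in\overline{E}_{\rho}(x_{1},x_{2})\setminus\{x_{1},x_{2}\}$ and suppose, for contradiction, that $fx\neq x$. Then $d(fx,x)>0$, so by the definition of $\rho$ in (\ref{number}) we have $d(fx,x)\geq \rho$. The lower bound $d(fx,x_{1})+d(fx,x_{2})>0$ coming from the hypothesis is exactly the side condition ``$fx\neq x_{1}$ or $fx\neq x_{2}$'' under which (\ref{eqn1}) is valid, so applying (\ref{eqn1}) and chaining with the upper bound supplied by the hypothesis produces
\[
d(fx,x)<d(fx,x_{1})+d(fx,x_{2})\leq \rho,
\]
which contradicts $d(fx,x)\geq \rho$. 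Hence $fx=x$, establishing $\overline{E}_{\rho}(x_{1},x_{2})\setminus\{x_{1},x_{2}\}\subseteq Fix(f)$.

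For the two excluded centers I would argue separately. If $fx_{1}\neq x_{1}$, then applying the $\mathcal{Z}_{E}$-contraction condition at $x=x_{1}$ together with $(\zeta_{2})$ forces $d(fx_{1},x_{2})>0$, so $fx_{1}\notin\{x_{1},x_{2}\}$. Combined with $d(fx_{1},x_{1})\geq\rho$ (from the infimum), an appeal to the hypothesis at the image point $fx_{1}$ (once one verifies $fx_{1}\in\overline{E}_{\rho}$) yields $d(fx_{1},x_{1})+d(fx_{1},x_{2})\leq\rho$, which collapses to $d(fx_{1},x_{2})\leq 0$, contradicting $d(fx_{1},x_{2})>0$. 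The case of $x_{2}$ is entirely symmetric.

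The main obstacle is precisely this last boundary step: the hypothesis is stated only for $x\in\overline{E}_{\rho}(x_{1},x_{2})\setminus\{x_{1},x_{2}\}$, so closing the argument at the two centers $x_{1},x_{2}$ requires either interpreting the hypothesis as extending to these two points or confirming that $fx_{1},fx_{2}$ must remain inside $\overline{E}_{\rho}$ so that the chain of inequalities carries through. The work for non-center points is otherwise mechanical, once the side condition of (\ref{eqn1}) is seen to be automatic from the strict lower bound in the hypothesis.
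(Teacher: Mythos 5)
Your treatment of the points of $\overline{E}_{\rho }(x_{1},x_{2})\setminus \left\{ x_{1},x_{2}\right\} $ is correct and is essentially the paper's own argument: the paper likewise fixes $x$ with $fx\neq x$, notes $0<\rho \leq d(x,fx)$ from $($\ref{number}$)$, and chains $(\zeta _{2})$ (equivalently, inequality $($\ref{eqn1}$)$) with the hypothesis $d(fx,x_{1})+d(fx,x_{2})\leq \rho $ to force $\zeta \left( d(fx,x),d(fx,x_{1})+d(fx,x_{2})\right) <0$, contradicting the $\mathcal{Z}_{E}$-contractive property. The only structural difference is that the paper first disposes of the degenerate cases $\rho =0$ and $x_{1}=x_{2}$ by reducing them to a fixed-disc theorem (Theorem 2.2 of \cite{Ozgur-simulation}), while you treat all non-center points uniformly; your observation that the side condition of $($\ref{eqn1}$)$ is automatic from the strict lower bound in the hypothesis is also correct.

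The boundary issue you flag at $x_{1},x_{2}$ is genuine, but your proposed repair does not close it: applying the hypothesis at the point $fx_{1}$ controls $d(f^{2}x_{1},x_{1})+d(f^{2}x_{1},x_{2})$, not $d(fx_{1},x_{1})+d(fx_{1},x_{2})$, so the claimed collapse to $d(fx_{1},x_{2})\leq 0$ does not follow. What the contraction condition and $(\zeta _{2})$ actually yield when $fx_{1}\neq x_{1}$ is only $d(fx_{1},x_{1})<d(fx_{1},x_{1})+d(fx_{1},x_{2})$, i.e. $fx_{1}\neq x_{2}$, which is not enough to conclude. You should be aware, however, that the paper's proof does not resolve this point either: in the main case it lets $x$ range over all of $\overline{E}_{\rho }(x_{1},x_{2})$ with $fx\neq x$ and invokes the bound $d(fx,x_{1})+d(fx,x_{2})\leq \rho $ without excluding the centers, so it tacitly assumes the hypothesis extends to $x_{1}$ and $x_{2}$. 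Either that extended reading of the hypothesis, or a weakening of the conclusion to $\overline{E}_{\rho }(x_{1},x_{2})\setminus \left\{ x_{1},x_{2}\right\} \subset Fix(f)$, is needed; your identification of this as the one non-mechanical step is accurate.
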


\begin{proof}
If $\rho =0$, then we have $\overline{E}_{\rho }(x_{1},x_{2})=D_{x_{1},\rho
}=\{x_{1}\}$ and this theorem coincides with Theorem 2.2 in \cite%
{Ozgur-simulation}. In this case, we have $fx_{1}=x_{1}$. Assume that $\rho
\neq 0$. If $x_{1}=x_{2}$ then $\overline{E}_{\rho }(x_{1},x_{2})=D_{x_{1},%
\frac{\rho }{2}}$ and again this case is reduced to Theorem 2.2 in \cite%
{Ozgur-simulation}.

Assume that $x_{1}\neq x_{2}$ and let $x\in \overline{E}_{\rho
}(x_{1},x_{2}) $ be such that $fx\neq x$. By the definition of $\rho $, we
have $0<\rho \leq d(x,fx)$ and using the condition $(\zeta _{2})$ we find
\begin{eqnarray*}
\zeta \left( d(fx,x),d(fx,x_{1})+d(fx,x_{2})\right)
&<&d(fx,x_{1})+d(fx,x_{2})-d(fx,x) \\
&\leq &\rho -d(fx,x)\leq \rho -\rho =0\text{,}
\end{eqnarray*}%
a contradiction with the $\mathcal{Z}_{E}$-contractive property of $f$. This
contradiction leads $fx=x$, so the set $Fix(f)$ contains the set $\overline{E%
}_{\rho }(x_{1},x_{2})$.
\end{proof}

\begin{example}
\label{exm22} Let us consider the self-mapping $f$ defined in Example \ref%
{exm21}. $f$ is an $\mathcal{Z}_{E}$-contraction with respect to $\zeta
(t,s)=\frac{1}{2}s-t$ with the points $x_{1}=-1$ and $x_{2}=1$. Indeed, we
get
\begin{eqnarray*}
\zeta \left( d(fx,x),d(fx,x_{1})+d(fx,x_{2})\right) &=&\zeta \left(
6,19+17\right) =\zeta \left( 6,36\right) \\
&=&\frac{36}{2}-6=12\geq 0\text{,}
\end{eqnarray*}%
for $x=12$ with $d(fx,x)>0$. Also we have,%
\begin{equation*}
0<d(fx,-1)+d(fx,1)\leq 6\text{,}
\end{equation*}%
for all $x\in \overline{E}_{6}(-1,1)-\left\{ -1,1\right\} $. Therefore, $f$
satisfies the conditions of Theorem \ref{thm1}. Notice that the set $Fix(f)$
contains the set $\overline{E}_{6}(-1,1)=\left\{ -3,-1,1,3\right\} $.
\end{example}

Let $r\in \left[ 0,\infty \right) $. Now we give a fixed-circle theorem
using the auxiliary function $\varphi _{r}:%
\mathbb{R}
^{+}\cup \left\{ 0\right\} \rightarrow
\mathbb{R}
$ defined by%
\begin{equation}
\varphi _{r}\left( u\right) =\left\{
\begin{array}{ccc}
u-r & ; & u>0 \\
0 & ; & u=0%
\end{array}%
\right. \text{,}  \label{auxiliary function}
\end{equation}%
for all $u\in
\mathbb{R}
^{+}\cup \left\{ 0\right\} $ (see \cite{Ozgur-Aip}).

\begin{theorem}
\label{thm23} Let $(X,d)$ be a metric space, $\zeta \in \mathcal{Z}$ be a
simulation function and $C_{x_{0},r}$ be any circle on $X$. If there exists
a self-mapping $f:X\rightarrow X$ satisfying

$i)$ $d\left( x_{0},fx\right) =r$ for each $x\in C_{x_{0},r}$,

$ii)$ $\zeta \left( r,d\left( fx,fy\right) \right) \geq 0$ for each $x,y\in
C_{x_{0},r}$ with $x\neq y$,

$iii)$ $\zeta \left( d\left( fx,fy\right) ,d\left( x,y\right) -\varphi
_{r}\left( d\left( x,fx\right) \right) \right) \geq 0$ for each $x,y\in
C_{x_{0},r}$,

$iv)$ $f$ is one to one on the circle $C_{x_{0},r}$, \newline
then the circle $C_{x_{0},r}$ is a fixed circle of $f$.
\end{theorem}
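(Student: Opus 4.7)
The plan is to argue by contradiction, leveraging the fact that condition $(i)$ forces $f$ to map $C_{x_{0},r}$ into itself (because $d(x_{0},fx)=r$ means $fx\in C_{x_{0},r}$). So for $y=fx$ we will be able to legitimately apply conditions $(ii)$ and $(iii)$, which only concern pairs of points on the circle.

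First I would dispose of the trivial case $r=0$: then $C_{x_{0},r}=\{x_{0}\}$ and $(i)$ already gives $fx_{0}=x_{0}$. Assume henceforth $r>0$, and suppose, aiming at a contradiction, that some $x\in C_{x_{0},r}$ satisfies $fx\neq x$, so $d(x,fx)>0$. Set $y:=fx$; by $(i)$ we have $y\in C_{x_{0},r}$, and $y\neq x$. Before invoking $(\zeta_{2})$, I need both arguments strictly positive; the key preliminary step is to show $f(fx)\neq fx$, and this is precisely where the injectivity assumption $(iv)$ enters: if $f(fx)=fx$, then $fy=y=fx$, and since $x\neq y$ this contradicts $f$ being one-to-one on $C_{x_{0},r}$. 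Hence $d(fx,f(fx))>0$.

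Now the two estimates fall out directly. Applying $(ii)$ with the pair $(x,y)=(x,fx)$, condition $(\zeta_{2})$ yields
\begin{equation*}
0\le \zeta\bigl(r,d(fx,f(fx))\bigr)<d(fx,f(fx))-r,
\end{equation*}
so $d(fx,f(fx))>r$. Applying $(iii)$ with the same pair, and using that $d(x,fx)>0$ gives $\varphi_{r}(d(x,fx))=d(x,fx)-r$, the second argument simplifies to
\begin{equation*}
d(x,fx)-\varphi_{r}(d(x,fx))=r.
\end{equation*}
Then $(\zeta_{2})$ gives
\begin{equation*}
0\le \zeta\bigl(d(fx,f(fx)),r\bigr)<r-d(fx,f(fx)),
\end{equation*}
so $d(fx,f(fx))<r$. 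These two conclusions contradict each other, forcing $fx=x$ for every $x\in C_{x_{0},r}$.

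The main obstacle, as I see it, is not the two inequalities themselves but the bookkeeping that lets us apply $(\zeta_{2})$: one must verify that both of its arguments are strictly positive at every invocation. This is precisely why $(iv)$ is needed (to guarantee $d(fx,f(fx))>0$) and why the case $r=0$ must be peeled off at the start. Everything else is a short computation once the auxiliary function $\varphi_{r}$ collapses the second argument of $\zeta$ in $(iii)$ to exactly $r$.
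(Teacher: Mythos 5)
Your proof is correct and follows essentially the same route as the paper's: apply $(ii)$ and $(iii)$ to the pair $(x,fx)$, use $\varphi_{r}$ to collapse the second argument in $(iii)$ to $r$, and derive the contradictory inequalities $d(fx,f^{2}x)>r$ and $d(fx,f^{2}x)<r$. Your explicit justification of why $(iv)$ guarantees $d(fx,f^{2}x)>0$ (needed to invoke $(\zeta_{2})$) and your separate treatment of $r=0$ are careful touches that the paper leaves implicit.
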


\begin{proof}
Let $x\in C_{x_{0},r}$ be an arbitrary point. By the condition $(i)$, we
have $d\left( x_{0},fx\right) =r$, that is, $fx\in C_{x_{0},r}$. Now we show
that $fx=x$ for all $x\in C_{x_{0},r}$. Conversely, assume that $x\neq fx$
for any $x\in C_{x_{0},r}$. Then we have $d\left( x,fx\right) >0$ and using
the conditions $(ii)$, $(iv)$ and $(\zeta _{2})$, we get%
\begin{equation*}
0\leq \zeta \left( r,d\left( fx,f^{2}x\right) \right) <d\left(
fx,f^{2}x\right) -r
\end{equation*}%
and so%
\begin{equation}
r<d\left( fx,f^{2}x\right) .  \label{eqn22}
\end{equation}%
Using the definition of the function $\varphi _{r}$ and the conditions $%
(iii) $, $(iv)$ and $(\zeta _{2})$, we obtain%
\begin{eqnarray*}
0 &\leq &\zeta \left( d\left( fx,f^{2}x\right) ,d\left( x,fx\right) -\varphi
_{r}\left( d\left( x,fx\right) \right) \right) =\zeta \left( d\left(
fx,f^{2}x\right) ,d\left( x,fx\right) -\left( d\left( x,fx\right) -r\right)
\right) \\
&=&\zeta \left( d\left( fx,f^{2}x\right) ,r\right) <r-d\left(
fx,f^{2}x\right)
\end{eqnarray*}%
and hence%
\begin{equation*}
d\left( fx,f^{2}x\right) <r\text{,}
\end{equation*}%
which is a contradiction with the inequality (\ref{eqn22}). Therefore it
should be $fx=x$ for each $x\in C_{x_{0},r}$. Consequently, $C_{x_{0},r}$ is
a fixed circle of $f$.
\end{proof}

\begin{remark}
\label{rem21} If we consider the self-mapping $f$ defined in Example \ref%
{exm21}, it is easy to verify that $f$ satisfies the conditions of Theorem %
\ref{thm21} and Theorem \ref{thm1} for the ellipse $E_{6}(-3,3)=\left\{
-3,-1,1,3\right\} $ with the simulation function $\zeta (t,s)=\frac{2}{3}s-t$%
. This shows that the fixed ellipse is not unique for the number $\rho $
defined in $($\ref{number}$)$. On the other hand, the fixed point set $%
Fix(f)=\left\{ -3,-1,1,3,18\right\} $ contains also the ellipses $%
E_{4}(-3,1)=\left\{ -3,-1,1\right\} $ and $E_{4}(-1,3)=\left\{
-1,1,3\right\} $ other than the ellipses $E_{6}(-3,3)$ and $E_{6}(-1,1)$. We
deduce that the number $\rho $ defined in $($\ref{number}$)$ can not produce
all fixed ellipses $($resp. circles$)$ for a self-mapping $f$.
\end{remark}

This remark shows also that a fixed ellipse may not be unique. Now, we give
a general result which ensure the uniqueness of a fixed geometric figure
(for example, a circle, an Apollonius circle, an ellipse, a hyperbola, etc.)
for a self-mapping of a metric space $(X,d)$.

\begin{theorem}
\label{thm24} $($The uniqueness theorem$)$ Let $(X,d)$ be a metric space,
the number $M\left( x,y\right) $ be defined as in $($\ref{definition of M}$)$
and $f:X\rightarrow X$ be a self-mapping. Assume that the fixed point set $%
Fix(f)$ contains a geometric figure $\mathcal{F}$. If there exists a
simulation function $\zeta \in \mathcal{Z}$ \ such that the condition
\begin{equation}
\zeta \left( d\left( fx,fy\right) ,M\left( x,y\right) \right) \geq 0
\label{eqn23}
\end{equation}%
is satisfied by $f$ for all $x\in \mathcal{F}$ and $y\in X-\mathcal{F}$,
then the figure $\mathcal{F}$ is the unique fixed figure of $f$.
\end{theorem}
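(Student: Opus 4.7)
The plan is to argue by contradiction: assume $f$ has a second fixed figure $\mathcal{F}'$ distinct from $\mathcal{F}$, and derive a violation of condition $(\zeta_{2})$ of the simulation function.

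More precisely, I would first observe that since $\mathcal{F}'\neq \mathcal{F}$ and $\mathcal{F}'\subseteq Fix(f)$, there must exist a point $y\in \mathcal{F}'$ with $y\notin \mathcal{F}$ (up to the symmetric role of the two figures, which covers the remaining case in exactly the same way). Pick any $x\in \mathcal{F}$; since $y\notin \mathcal{F}$ we have $x\neq y$, so $d(x,y)>0$. Both points are fixed by $f$, i.e.\ $fx=x$ and $fy=y$, so $d(fx,fy)=d(x,y)>0$.

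The key computation is to evaluate $M(x,y)$ at this pair. Using $d(x,fx)=d(y,fy)=0$ and $d(x,fy)=d(x,y)=d(y,fx)$, the definition $(\ref{definition of M})$ collapses to
\begin{equation*}
M(x,y)=\max\left\{0,\;0,\;\frac{d(x,fy)+d(y,fx)}{2}\right\}=d(x,y).
\end{equation*}
Hence $d(fx,fy)=M(x,y)=d(x,y)>0$. Since $x\in\mathcal{F}$ and $y\in X-\mathcal{F}$, the hypothesis $(\ref{eqn23})$ applies and yields $\zeta(d(fx,fy),M(x,y))\geq 0$. On the other hand, applying $(\zeta_{2})$ with $t=s=d(x,y)>0$ gives
\begin{equation*}
\zeta\bigl(d(fx,fy),M(x,y)\bigr)=\zeta\bigl(d(x,y),d(x,y)\bigr)<d(x,y)-d(x,y)=0,
\end{equation*}
a contradiction. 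Therefore no such $\mathcal{F}'$ exists, and $\mathcal{F}$ is the unique fixed figure of $f$.

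I do not expect a serious obstacle here: the proof is essentially a one-line application of $(\zeta_{2})$ once one notices that on a pair of fixed points the max defining $M(x,y)$ reduces to $d(x,y)$ itself. The only mildly delicate point is the logical step of extracting a witness $y\in\mathcal{F}'\setminus\mathcal{F}$ from the assumption $\mathcal{F}'\neq\mathcal{F}$; since the statement is purely set-theoretic about subsets of $Fix(f)$, this is handled by the symmetric difference being nonempty, and the argument above goes through with either of the two figures playing the role of $\mathcal{F}$. No further properties of $\zeta$ beyond $(\zeta_{2})$, and no auxiliary function $\varphi$ or number $\rho$, are needed.
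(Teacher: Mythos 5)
Your proof is correct and is essentially the paper's own argument: take $x\in\mathcal{F}$ and $y$ in the putative second figure, observe that both points are fixed so that $M(x,y)$ collapses to $d(x,y)$, and then contradict $(\zeta_2)$ via the hypothesis (\ref{eqn23}); you are in fact somewhat more careful than the paper, which applies (\ref{eqn23}) to an arbitrary $y\in\mathcal{F}^{\ast}$ without first checking that $y\notin\mathcal{F}$, and which leaves the computation $M(x,y)=d(x,y)$ implicit. The only overstatement is your appeal to ``the symmetric role of the two figures'': hypothesis (\ref{eqn23}) is asymmetric (it is assumed only for $x\in\mathcal{F}$ and $y\in X-\mathcal{F}$), so the degenerate case $\mathcal{F}'\subsetneq\mathcal{F}$ is not actually covered by your argument --- but the paper's proof does not address that case either, and it is really a gap in the theorem's formulation rather than in your reasoning.
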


\begin{proof}
Assume that $\mathcal{F}^{\ast }$ is another fixed figure of $f$. Let $x\in
\mathcal{F}$, $y\in \mathcal{F}^{\ast }$ with $x\neq y$ be arbitrary points.
Using the inequality (\ref{eqn23}) and the condition $(\zeta _{2})$, we find%
\begin{equation*}
0\leq \zeta \left( d\left( fx,fy\right) ,M\left( x,y\right) \right) =\zeta
\left( d\left( x,y\right) ,d\left( x,y\right) \right) <d\left( x,y\right)
-d\left( x,y\right) =0\text{,}
\end{equation*}%
a contradiction. Hence, it should be $x=y$ for all $x\in \mathcal{F}$, $y\in
\mathcal{F}^{\ast }$. This shows the uniqueness of the fixed figure $%
\mathcal{F}$ of $f$.
\end{proof}

Now we give a condition which excludes the identity map $I_{X}:X\rightarrow
X $ defined by $I_{X}(x)=x$ for all $x\in X$ from the above results.

\begin{theorem}
\label{thm25} Let $(X,d)$ be a metric space, $f:X\rightarrow X$ be a\
self-mapping and $r\in \left[ 0,\infty \right) $ be a fixed number. If there
exists a simulation function $\zeta \in \mathcal{Z}$ such that the condition
\begin{equation*}
d\left( x,fx\right) <\zeta \left( d\left( x,fx\right) ,\varphi _{r}\left(
d\left( x,fx\right) \right) +r\right)
\end{equation*}%
is satisfied by $f$ for all $x\notin Fix(f)$ if and only if $f=I_{X}$.
\end{theorem}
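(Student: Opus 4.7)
The plan is to handle the two directions of the biconditional separately. The backward direction ($f=I_X$ implies the displayed inequality) is vacuous: if $f = I_X$, then $X\setminus Fix(f)=\emptyset$, so there is no $x$ against which the condition must be checked.

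For the forward direction, I would argue by contradiction. Assume the condition holds for every $x\notin Fix(f)$, yet $f\neq I_X$. Then some $x_{0}\in X$ satisfies $fx_{0}\neq x_{0}$, that is, $x_{0}\notin Fix(f)$ and $d(x_{0},fx_{0})>0$.

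The crucial algebraic simplification is that the auxiliary function $\varphi_{r}$ defined in (\ref{auxiliary function}) satisfies $\varphi_{r}(u)+r=u$ whenever $u>0$. Applied at $u=d(x_{0},fx_{0})>0$, this gives $\varphi_{r}(d(x_{0},fx_{0}))+r=d(x_{0},fx_{0})$. Substituting into the standing hypothesis yields
\[
d(x_{0},fx_{0})<\zeta\bigl(d(x_{0},fx_{0}),\,d(x_{0},fx_{0})\bigr).
\]
Since both coordinates of $\zeta$ are positive and equal, axiom $(\zeta_{2})$ forces $\zeta\bigl(d(x_{0},fx_{0}),d(x_{0},fx_{0})\bigr)<d(x_{0},fx_{0})-d(x_{0},fx_{0})=0$. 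Combining the two inequalities produces $d(x_{0},fx_{0})<0$, a contradiction. Therefore $f=I_{X}$.

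There is essentially no obstacle here: the proof reduces to a one-line observation that $\varphi_{r}(u)+r$ collapses to $u$ on the positive reals, after which $(\zeta_{2})$ does all the work. The only mildly subtle point is noticing that the condition is vacuously satisfied by the identity map, so the biconditional really is a characterization of $I_{X}$ via the impossibility of a non-fixed point, rather than asserting something nontrivial about $I_{X}$ itself.
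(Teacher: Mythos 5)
Your proof is correct and follows essentially the same route as the paper's: both arguments use the collapse $\varphi_{r}(u)+r=u$ for $u>0$ to reduce the hypothesis to $d(x,fx)<\zeta\left(d(x,fx),d(x,fx)\right)$, invoke $(\zeta_{2})$ to get a contradiction, and note that the identity map satisfies the condition vacuously. No gaps.
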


\begin{proof}
Let $x\in X$ be an arbitrary point with $x\notin Fix(f)$. Using (\ref%
{auxiliary function}) and the condition $(\zeta _{2})$, we find%
\begin{eqnarray*}
d\left( x,fx\right) &\leq &\zeta \left( d\left( x,fx\right) ,\varphi
_{r}\left( d\left( x,fx\right) \right) +r\right) =\zeta \left( d\left(
x,fx\right) ,\left( d\left( x,fx\right) -r\right) +r\right) \\
&=&\zeta \left( d\left( x,fx\right) ,d\left( x,fx\right) \right) <d\left(
x,fx\right) -d\left( x,fx\right) =0\text{,}
\end{eqnarray*}%
a contradiction. Hence, it should be $x\in Fix(f)$ for all $x\in X$, that
is, $Fix(f)=X$. This shows that $f=I_{X}$. Clearly, the identity map $I_{X}$
satisfies the condition of the hypothesis for any simulation function $\zeta
\in \mathcal{Z}$.
\end{proof}

\subsection{\textbf{Geometric study of fixed points on $S$-metric and $b$%
-metric spaces}}

At first, we recall the concept of an $S$-metric space.

\begin{definition}
\label{def31} \cite{Sedghi 2012} Let $X$ be nonempty set and $\mathcal{S}%
:X^{3}\rightarrow \lbrack 0,\infty )$ be a function satisfying the following
conditions

\begin{enumerate}
\item $\mathcal{S}(x,y,z)=0$ if and only if $x=y=z$,

\item $\mathcal{S}(x,y,z)\leq \mathcal{S}(x,x,a)+\mathcal{S}(y,y,a)+\mathcal{%
S}(z,z,a)$,
\end{enumerate}

for all $x,y,z,a\in X$. Then $S$ is called an $S$-metric on $X$ and the pair
$(X,\mathcal{S})$ is called an $S$-metric space.
\end{definition}

Let $(X,d)$ be a metric space. It is known that the function $\mathcal{S}%
_{d}:X^{3}\rightarrow \left[ 0,\infty \right) $ defined by
\begin{equation*}
\mathcal{S}_{d}\left( x,y,z\right) =d\left( x,z\right) +d\left( y,z\right)
\end{equation*}%
for all $x,y,z\in X$ is an $S$-metric on $X$ \cite{Hieu 2015}. The $S$%
-metric $\mathcal{S}_{d}$ is called the $S$-metric generated by the metric $%
d $ \cite{Ozgur 2017}. For example, let $X=%
\mathbb{R}
$ and the function $\mathcal{S}:X^{3}\rightarrow \lbrack 0,\infty )$ be
defined by%
\begin{equation}
\mathcal{S}(x,y,z)=\left\vert x-z\right\vert +\left\vert y-z\right\vert
\text{,}  \label{usual S metric}
\end{equation}%
for all $x,y,z\in
\mathbb{R}
$ \cite{Sedghi 2014}. Then $(X,\mathcal{S})$ is called the usual $S$-metric
space. This $S$-metric is generated by the usual metric on $%
\mathbb{R}
$. The main motivation of this subsection is the existence of some examples
of $S$-metrics which are not generated by any metric. For example, let $X=%
\mathbb{R}
$ and the function $\mathcal{S}:X^{3}\rightarrow \lbrack 0,\infty )$ be
defined by%
\begin{equation}
\mathcal{S}(x,y,z)=\left\vert x-z\right\vert +\left\vert x+z-2y\right\vert
\text{,}  \label{second S metric}
\end{equation}%
for all $x$, $y$ and $z\in
\mathbb{R}
$. Then, $\mathcal{S}$ is an $S$-metric on $%
\mathbb{R}
$, which is not generated by any metric, and the pair $\left(
\mathbb{R}
,\mathcal{S}\right) $ is an $S$-metric space (see \cite{Ozgur 2017} for more
details and examples).

Let $(X,\mathcal{S})$ be an $S$-metric space and $f:X\rightarrow X$ be a
self-mapping. In this subsection, we give new solutions to the fixed-circle
problem (resp. fixed-disc problem and fixed ellipse problem) for
self-mappings of an $S$-metric space (resp. a $b$-metric space). For the $S$%
-metric case, we use the following numbers%
\begin{equation}
\mu =\inf \left\{ \mathcal{S}(x,x,fx):x\neq fx,x\in X\right\} \text{,}
\label{radius}
\end{equation}%
\begin{equation}
M_{S}(x,y)=\max \left\{
\begin{array}{c}
a\mathcal{S}(x,x,fx)+(1-a)\mathcal{S}(y,y,fy), \\
(1-a)\mathcal{S}(x,x,fx)+a\mathcal{S}(y,y,fy),\frac{\mathcal{S}(x,x,fy)+%
\mathcal{S}(y,y,fx)}{4}%
\end{array}%
\right\} ,\text{ }0\leq a<1  \label{the number Ms(x,y)}
\end{equation}%
and the following symmetry property given in \cite{Sedghi 2012}
\begin{equation}
\mathcal{S}\left( x,x,y\right) =\mathcal{S}\left( y,y,x\right) \text{,}
\label{symmetry property}
\end{equation}%
for all $x,y\in X$ on an $S$-metric space $(X,\mathcal{S})$. Before stating
our results, we recall the definitions of a circle, a disc and an ellipse on
an $S$-metric space, respectively, as follows:%
\begin{equation*}
C_{x_{0},r}^{S}=\left\{ x\in X:\mathcal{S}(x,x,x_{0})=r\right\} \text{,}
\end{equation*}%
\begin{equation*}
D_{x_{0},r}^{S}=\left\{ x\in X:\mathcal{S}(x,x,x_{0})\leq r\right\}
\end{equation*}%
and%
\begin{equation*}
E_{r}^{S}(x_{1},x_{2})=\left\{ x\in X:\mathcal{S}(x,x,x_{1})+\mathcal{S}%
(x,x,x_{2})=r\right\} \text{,}
\end{equation*}%
where $r\in \lbrack 0,\infty )$ \cite{Ozgur-Tas-circle-thesis}, \cite{Sedghi
2012}. For a self-mapping $f$ of an $S$-metric space, the definition of a
fixed figure (circle, disc, ellipse, etc.) can be given similar to the case
introduced in the previous section (see \cite{Ozgur-Tas-circle-thesis} and
\cite{Mlaiki 2018} for the definitions of a fixed circle and a fixed disc).

\begin{theorem}
\label{thms1} Let $(X,\mathcal{S})$ be an $S$-metric space, $f:X\rightarrow
X $ be a self-mapping, $\zeta \in \mathcal{Z}$ be a simulation function and
the number $\mu $ be defined as in $($\ref{radius}$)$. If there exist some
points $x_{1},x_{2}\in X$ such that

$(a)$ For all $x\in E_{\rho }^{S}(x_{1},x_{2})$, there exists $\delta (\mu
)>0$ satisfying%
\begin{equation*}
\frac{\mu }{2}\leq M_{S}(x,x_{1})+M_{S}(x,x_{2})<\frac{\mu }{2}+\delta (\mu
)\Longrightarrow \mathcal{S}(fx,fx,x_{1})+\mathcal{S}(fx,fx,x_{2})\leq \mu
\text{,}
\end{equation*}

$(b)$ For all $x\in X$,%
\begin{equation*}
\mathcal{S}(fx,fx,x)>0\Longrightarrow \zeta \left( \mathcal{S}%
(fx,fx,x),M_{S}(x,x_{1})\right) \geq 0\text{ and }\zeta \left( \mathcal{S}%
(fx,fx,x),M_{S}(x,x_{2})\right) \geq 0\text{,}
\end{equation*}

$(c)$ For all $x\in X$,%
\begin{equation*}
\mathcal{S}(fx,fx,x)>0\Longrightarrow \zeta \left( \mathcal{S}(fx,fx,x),%
\frac{\mathcal{S}(x,x,x_{1})+\mathcal{S}(fx,fx,x_{1})+\mathcal{S}(x,x,x_{2})+%
\mathcal{S}(fx,fx,x_{2})}{2}\right) \geq 0\text{,}
\end{equation*}%
then $fx_{1}=x_{1}$, $fx_{2}=x_{2}$ and $Fix(f)$ contains the ellipse $%
E_{\mu }^{S}(x_{1},x_{2})$.
\end{theorem}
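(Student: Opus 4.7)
The plan is to mirror the three-part argument of Theorem \ref{thm21}, replacing the metric $d$ by the $S$-metric $\mathcal{S}$ and exploiting the symmetry property $\mathcal{S}(x,x,y)=\mathcal{S}(y,y,x)$ stated in \eqref{symmetry property}. The skeleton is: (i) rule out $fx_i\neq x_i$ by specialising $M_S$ at coinciding arguments; (ii) handle $\mu=0$ separately; (iii) for $\mu>0$, assume some point on the ellipse moves and derive a contradiction with the infimum defining $\mu$.

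First I would verify that $fx_1=x_1$ and $fx_2=x_2$. Substituting $y=x_i$ into \eqref{the number Ms(x,y)} one obtains $M_S(x_i,x_i)=\mathcal{S}(x_i,x_i,fx_i)$, since the first two entries of the maximum collapse to $\mathcal{S}(x_i,x_i,fx_i)$ and dominate the third term $\mathcal{S}(x_i,x_i,fx_i)/2$. If $fx_i\neq x_i$, then $\mathcal{S}(fx_i,fx_i,x_i)>0$ by \eqref{symmetry property}, so condition $(b)$ together with $(\zeta_2)$ yields
\begin{equation*}
0\leq \zeta\bigl(\mathcal{S}(fx_i,fx_i,x_i),\mathcal{S}(fx_i,fx_i,x_i)\bigr)<0,
\end{equation*}
a contradiction.

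Next, dispose of the degenerate case $\mu=0$: then $x_1=x_2$ and $E_{\mu}^{S}(x_1,x_2)=\{x_1\}$, so the assertion follows from the previous step. Assume $\mu>0$ and pick any $x\in E_{\mu}^{S}(x_1,x_2)$ with $fx\neq x$, so $\mathcal{S}(fx,fx,x)>0$. Invoking $(a)$ gives $\mathcal{S}(fx,fx,x_1)+\mathcal{S}(fx,fx,x_2)\leq \mu$, while membership in the ellipse gives $\mathcal{S}(x,x,x_1)+\mathcal{S}(x,x,x_2)=\mu$. Applying $(c)$ and $(\zeta_2)$, after splitting the averaged sum into the two pairs of terms above, produces
\begin{equation*}
0\leq \zeta\bigl(\mathcal{S}(fx,fx,x),\cdot\bigr)<\frac{\mu}{2}+\frac{\mu}{2}-\mathcal{S}(fx,fx,x),
\end{equation*}
hence $\mathcal{S}(fx,fx,x)<\mu$. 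By \eqref{symmetry property} this reads $\mathcal{S}(x,x,fx)<\mu$, contradicting the definition \eqref{radius} of $\mu$. Therefore $fx=x$ for every $x\in E_{\mu}^{S}(x_1,x_2)$.

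The main obstacle is the careful bookkeeping of the $S$-metric's symmetry property, since $\mathcal{S}$ is not automatically invariant under arbitrary permutations of its three arguments; one must systematically apply \eqref{symmetry property} to convert $\mathcal{S}(fx,fx,x_i)$ into $\mathcal{S}(x_i,x_i,fx)$ (and likewise for $\mathcal{S}(fx,fx,x)$ versus $\mathcal{S}(x,x,fx)$) so that the inequalities coming from $(a)$ and $(c)$ line up with the quantity $\mathcal{S}(x,x,fx)$ that is controlled by the infimum $\mu$. Beyond this, no new analytic ingredients are needed: the argument is a direct transcription of the metric proof of Theorem \ref{thm21}.
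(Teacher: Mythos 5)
Your proposal is correct and follows essentially the same route as the paper's own proof: specialising $M_S$ at coinciding arguments together with condition $(b)$ and $(\zeta_2)$ to fix $x_1,x_2$, disposing of $\mu=0$ separately, and then combining $(a)$, $(c)$, $(\zeta_2)$ and the symmetry property \eqref{symmetry property} to force $\mathcal{S}(x,x,fx)<\mu$ and contradict \eqref{radius}. You even reproduce the paper's own (unverified) step of reading off the conclusion of the implication in $(a)$ without checking its antecedent, so the two arguments match line for line.
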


\begin{proof}
We have
\begin{equation*}
M_{S}(x_{1},x_{1})=\mathcal{S}(x_{1},x_{1},fx_{1})\text{ and }%
M_{S}(x_{2},x_{2})=\mathcal{S}(x_{2},x_{2},fx_{2})\text{.}
\end{equation*}%
First, we show that $fx_{1}=x_{1}$ and $fx_{2}=x_{2}$. If $fx_{1}\neq x_{1}$
and $fx_{2}\neq x_{2}$ then $\mathcal{S}(x_{1},x_{1},fx_{1})>0$ and $%
\mathcal{S}(x_{2},x_{2},fx_{2})>0$. Using the symmetry condition (\ref%
{symmetry property}) and the condition $(\zeta _{2})$, we obtain
\begin{eqnarray*}
\zeta \left( \mathcal{S}(fx_{1},fx_{1},x_{1}),M_{S}(x_{1},x_{1})\right)
&=&\zeta \left( \mathcal{S}(fx_{1},fx_{1},x_{1}),\mathcal{S}%
(x_{1},x_{1},fx_{1})\right) \\
&<&\mathcal{S}(fx_{1},fx_{1},x_{1})-\mathcal{S}(x_{1},x_{1},fx_{1})=0
\end{eqnarray*}%
and%
\begin{eqnarray*}
\zeta \left( \mathcal{S}(fx_{2},fx_{2},x_{2}),M_{S}(x_{2},x_{2})\right)
&=&\zeta \left( \mathcal{S}(fx_{2},fx_{2},x_{2}),d(fx_{2},x_{2})\right) \\
&<&d(fx_{2},x_{2})-d(fx_{2},x_{2})=0\text{,}
\end{eqnarray*}%
which are contradictions by the condition $(b)$. Hence it should be $%
fx_{1}=x_{1}$ and $fx_{2}=x_{2}$.

If $\mu =0$, it is easy to check that $E_{\mu
}^{S}(x_{1},x_{2})=C_{x_{1},\mu }^{S}=\{x_{1}\}$ \ and $x_{1}=x_{2}$. Hence,
the proof is completed.

Assume that $\mu \neq 0$. Let $x\in E_{\mu }^{S}(x_{1},x_{2})$ be any point
such that $fx\neq x$. Then $\mathcal{S}(x,x,fx)>0$ and we have%
\begin{equation*}
M^{S}(x,x_{1})=\max \left\{ a\mathcal{S}(x,x,fx),(1-a)\mathcal{S}(x,x,fx),%
\frac{\mathcal{S}(x,x,fx_{1})+\mathcal{S}(x_{1},x_{1},fx)}{4}\right\}
\end{equation*}%
and
\begin{equation*}
M^{S}(x,x_{2})=\max \left\{ a\mathcal{S}(x,x,fx),(1-a)\mathcal{S}(x,x,fx),%
\frac{\mathcal{S}(x,x,fx_{2})+\mathcal{S}(x_{2},x_{2},fx)}{4}\right\} \text{.%
}
\end{equation*}%
Using the condition $(a)$, we get%
\begin{equation}
\frac{\mu }{2}\leq M^{S}(x,x_{1})+M^{S}(x,x_{2})<\frac{\mu }{2}+\delta (\mu
)\Longrightarrow \mathcal{S}(fx,fx,x_{1})+\mathcal{S}(fx,fx,x_{2})\leq \mu
\text{.}  \label{eqn21s}
\end{equation}%
Now, using the inequality (\ref{eqn21s}), the conditions $(c)$, $(\zeta
_{2}) $ and the symmetry condition (\ref{symmetry property}), we obtain%
\begin{eqnarray*}
0 &\leq &\zeta \left( \mathcal{S}(fx,fx,x),\frac{\mathcal{S}(x,x,x_{1})+%
\mathcal{S}(fx,fx,x_{1})+\mathcal{S}(x,x,x_{2})+\mathcal{S}(fx,fx,x_{2})}{2}%
\right) \\
&<&\frac{\mathcal{S}(x,x,x_{1})+\mathcal{S}(fx,fx,x_{1})+\mathcal{S}%
(x,x,x_{2})+\mathcal{S}(fx,fx,x_{2})}{2}-\mathcal{S}(fx,fx,x) \\
&=&\frac{\mathcal{S}(x,x,x_{1})+\mathcal{S}(x,x,x_{2})}{2}+\frac{\mathcal{S}%
(fx,fx,x_{1})+\mathcal{S}(fx,fx,x_{2})}{2}-\mathcal{S}(fx,fx,x) \\
&\leq &\frac{\mu }{2}+\frac{\mu }{2}-\mathcal{S}(fx,fx,x)=\mu -\mathcal{S}%
(fx,fx,x)
\end{eqnarray*}%
and so%
\begin{equation*}
\mathcal{S}(fx,fx,x)<\mu \text{,}
\end{equation*}%
which is a contradiction by the definition of the number $\mu $. This
contradiction leads to $fx=x$. Consequently, we have $E_{\mu
}^{S}(x_{1},x_{2})\subset Fix(f)$.
\end{proof}

If $x_{1}=x_{2}$ then we have $E_{\mu }^{S}(x_{1},x_{2})=C_{x_{1},\frac{\mu
}{2}}$ and Theorem \ref{thms1} is reduced to a fixed-circle theorem as
follows$:$

\begin{theorem}
\label{thm4} Let $(X,\mathcal{S})$ be an $S$-metric space, $f:X\rightarrow X$
be a self-mapping and the number $\mu $ be defined as in $($\ref{radius}$)$.
If there exist a simulation function $\zeta \in \mathcal{Z}$ and a point $%
x_{0}\in X$ such that

$(i)$ For all $x\in C_{x_{0},\mu }^{S}$, there exists a $\delta (\mu )>0$
satisfying%
\begin{equation*}
\frac{\mu }{4}\leq M_{S}(x,x_{0})<\frac{\mu }{4}+\delta (\mu
)\Longrightarrow \mathcal{S}(fx,fx,x_{0})\leq \mu ,
\end{equation*}

$(ii)$ For all $x\in X$,%
\begin{equation*}
\mathcal{S}(fx,fx,x)>0\Longrightarrow \zeta (\mathcal{S}\left(
fx,fx,x\right) ,M_{S}(x,x_{0}))\geq 0,
\end{equation*}%
then $x_{0}\in Fix\left( f\right) $ and the circle $C_{x_{0},\mu }^{S}$ is a
fixed circle of $f$.
\end{theorem}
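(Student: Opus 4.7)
The plan is to mirror the argument used for Theorem \ref{thms1}, specialized to the case $x_1 = x_2 = x_0$, while exploiting the $S$-metric symmetry (\ref{symmetry property}) and the $S$-metric triangle inequality in order to handle the circle $C_{x_0,\mu}^S$ directly. I will work in two stages: first establish that $x_0$ itself is a fixed point, and then show that no point of $C_{x_0,\mu}^S$ can fail to be fixed.

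For the first stage, I compute $M_S(x_0,x_0)$ directly from (\ref{the number Ms(x,y)}). Each of the first two candidates in the max equals $\mathcal{S}(x_0,x_0,fx_0)$, while the third equals $\frac{\mathcal{S}(x_0,x_0,fx_0)}{2}$ after applying (\ref{symmetry property}), so $M_S(x_0,x_0)=\mathcal{S}(x_0,x_0,fx_0)=\mathcal{S}(fx_0,fx_0,x_0)$. If $fx_0\neq x_0$, then this quantity is positive, and condition $(ii)$ applied at $x=x_0$ together with $(\zeta_2)$ forces $0\leq \zeta(\mathcal{S}(fx_0,fx_0,x_0),\mathcal{S}(fx_0,fx_0,x_0))<0$, a contradiction. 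Hence $fx_0=x_0$.

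For the second stage, the case $\mu=0$ is immediate, since then $C_{x_0,0}^S=\{x_0\}\subset Fix(f)$. So assume $\mu>0$ and, for contradiction, let $x\in C_{x_0,\mu}^S$ satisfy $fx\neq x$; then $\mathcal{S}(x,x,fx)>0$ and, by the definition of $\mu$ in (\ref{radius}), $\mathcal{S}(x,x,fx)\geq\mu$. Using $fx_0=x_0$, $\mathcal{S}(x,x,x_0)=\mu$, and (\ref{symmetry property}), the expression $M_S(x,x_0)$ reduces to the maximum of $a\mathcal{S}(x,x,fx)$, $(1-a)\mathcal{S}(x,x,fx)$, and $\frac{\mu+\mathcal{S}(fx,fx,x_0)}{4}$. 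Condition $(i)$ is then invoked (in the same direct manner as condition $(a)$ is used in the proof of Theorem \ref{thms1}) to obtain $\mathcal{S}(fx,fx,x_0)\leq\mu$, so the third candidate is at most $\frac{\mu+\mu}{4}=\mu/2\leq\mathcal{S}(fx,fx,x)$. Since the first two candidates are trivially at most $\mathcal{S}(x,x,fx)=\mathcal{S}(fx,fx,x)$ (using $a,1-a\in[0,1]$ and the symmetry property), we obtain $M_S(x,x_0)\leq\mathcal{S}(fx,fx,x)$. Because $M_S(x,x_0)\geq\mu/4>0$, condition $(ii)$ together with $(\zeta_2)$ yields $\mathcal{S}(fx,fx,x)<M_S(x,x_0)$, contradicting the previous bound. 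Therefore $fx=x$ and $C_{x_0,\mu}^S\subset Fix(f)$.

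The main obstacle is the legitimate use of condition $(i)$, whose premise $M_S(x,x_0)\in[\mu/4,\mu/4+\delta(\mu))$ is not automatic, since $M_S(x,x_0)$ contains $a\mathcal{S}(x,x,fx)$ and $\mathcal{S}(x,x,fx)$ may be arbitrarily large. As a fallback, the $S$-metric triangle inequality yields $\mathcal{S}(fx,fx,x_0)\leq 2\mathcal{S}(fx,fx,x)+\mu$, whence $\frac{\mu+\mathcal{S}(fx,fx,x_0)}{4}\leq\frac{\mu+\mathcal{S}(fx,fx,x)}{2}\leq\mathcal{S}(fx,fx,x)$ thanks to $\mathcal{S}(fx,fx,x)\geq\mu$; this renders the key inequality $M_S(x,x_0)\leq\mathcal{S}(fx,fx,x)$ independent of condition $(i)$, so the final contradiction goes through regardless.
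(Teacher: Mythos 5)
Your proposal is correct, and its first stage (showing $fx_{0}=x_{0}$ via $M_{S}(x_{0},x_{0})=\mathcal{S}(fx_{0},fx_{0},x_{0})$, condition $(ii)$ and $(\zeta_{2})$) coincides with the paper's proof, as does the overall two-stage contradiction structure. Where you genuinely diverge is in how the final contradiction is reached for a point $x\in C_{x_{0},\mu }^{S}$ with $fx\neq x$. The paper's proof writes $0<\zeta (\mathcal{S}(fx,fx,x),M_{S}(x,x_{0}))<M_{S}(x,x_{0})-\mathcal{S}(fx,fx,x)<\mu -\mathcal{S}(fx,fx,x)$ and contradicts the definition of $\mu $; the middle step silently assumes $M_{S}(x,x_{0})<\mu $, which condition $(i)$ does not actually deliver (it only yields $\mathcal{S}(fx,fx,x_{0})\leq \mu $, and that only when its own premise on $M_{S}(x,x_{0})$ holds, while $M_{S}(x,x_{0})\geq \max \{a,1-a\}\,\mathcal{S}(x,x,fx)\geq \mu /2$ can well exceed $\mu $). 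You correctly flag exactly this as the obstacle and replace it with the bound $M_{S}(x,x_{0})\leq \mathcal{S}(fx,fx,x)$, obtained for the third candidate from the $S$-metric triangle inequality $\mathcal{S}(fx,fx,x_{0})\leq 2\mathcal{S}(fx,fx,x)+\mathcal{S}(x_{0},x_{0},x)=2\mathcal{S}(fx,fx,x)+\mu $ together with $\mathcal{S}(fx,fx,x)\geq \mu $, and for the first two candidates from $a,1-a\in \lbrack 0,1]$ and $fx_{0}=x_{0}$; combined with the strict inequality $\mathcal{S}(fx,fx,x)<M_{S}(x,x_{0})$ coming from $(ii)$ and $(\zeta _{2})$, this gives a clean contradiction. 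Your fallback argument is fully rigorous, does not rely on condition $(i)$ at all (so it in fact shows $(i)$ is superfluous for the conclusion), and repairs a gap in the paper's own reasoning; the only cost is that it no longer exhibits the theorem as a literal specialization of Theorem \ref{thms1}, which is the presentational benefit the paper's route was aiming for.
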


\begin{proof}
If $fx_{0}\neq x_{0}$ then using the symmetry property (\ref{symmetry
property}) and the condition $(\zeta _{2})$, we get%
\begin{equation*}
M_{S}(x_{0},x_{0})=\mathcal{S}\left( x_{0},x_{0},fx_{0}\right) =\mathcal{S}%
\left( fx_{0},fx_{0},x_{0}\right)
\end{equation*}%
and
\begin{eqnarray*}
\zeta \left( \mathcal{S}\left( fx_{0},fx_{0},x_{0}\right)
,M_{S}(x_{0},x_{0})\right) &=&\zeta \left( \mathcal{S}\left(
fx_{0},fx_{0},x_{0}\right) ,\mathcal{S}\left( fx_{0},fx_{0},x_{0}\right)
\right) \\
&<&\mathcal{S}\left( fx_{0},fx_{0},x_{0}\right) -\mathcal{S}\left(
fx_{0},fx_{0},x_{0}\right) =0\text{.}
\end{eqnarray*}%
This last inequality is a contradiction by $(ii)$. Therefore, it should be $%
fx_{0}=x_{0}$. This shows that the circle $C_{x_{0},\mu }^{S}=\{x_{0}\}$ is
a fixed circle of $f$ when $\mu =0$. Now, let $\mu >0$ and $x\in
C_{x_{0},\mu }^{S}$ be any element. To show that $f$ fixes the circle $%
C_{x_{0},\mu }^{S}$, we suppose that $fx\neq x$ for any $x\in C_{x_{0},\mu
}^{S}$. Since $x_{0}\in Fix\left( f\right) $, we have
\begin{eqnarray*}
M_{S}(x,x_{0}) &=&\max \left\{ a\mathcal{S}(x,x,fx),(1-a)\mathcal{S}(x,x,fx),%
\frac{\mathcal{S}(x,x,fx_{0})+\mathcal{S}(x_{0},x_{0},fx)}{4}\right\} \\
&=&\max \left\{ a\mathcal{S}(x,x,fx),(1-a)\mathcal{S}(x,x,fx),\frac{\mu +%
\mathcal{S}(x_{0},x_{0},fx)}{4}\right\} .
\end{eqnarray*}%
Using the conditions $(i)$, $(ii)$, $(\zeta _{2})$ and the symmetry property
(\ref{symmetry property}), we find%
\begin{eqnarray*}
0 &<&\zeta \left( \mathcal{S}\left( fx,fx,x\right) ,M_{S}(x,x_{0})\right) \\
&<&M_{S}(x,x_{0})-\mathcal{S}\left( fx,fx,x\right) <\mu -\mathcal{S}\left(
fx,fx,x\right) \text{,}
\end{eqnarray*}%
a contradiction by the definition of the number $\mu $. Hence, we have $fx=x$%
. Consequently, $f$ fixes the circle $C_{x_{0},\mu }^{S}$.
\end{proof}

\begin{corollary}
\label{cor1} Let $(X,\mathcal{S})$ be an $S$-metric space, $f:X\rightarrow X$
be a self-mapping and the number $\mu $ be defined as in $($\ref{radius}$)$.
If there exist a simulation function $\zeta \in \mathcal{Z}$ and a point $%
x_{0}\in X$ such that

$(i)$ For all $x\in D_{x_{0},\mu }^{S}$, there exists a $\delta (\mu )>0$
satisfying%
\begin{equation*}
\frac{\mu }{4}\leq M_{S}(x,x_{0})<\frac{r}{4}+\delta (\mu )\Longrightarrow
\mathcal{S}(fx,fx,x_{0})\leq \mu ,
\end{equation*}

$(ii)$ For all $x\in X$,%
\begin{equation*}
\mathcal{S}(fx,fx,x)>0\Longrightarrow \zeta (\mathcal{S}\left(
fx,fx,x\right) ,M_{S}(x,x_{0}))\geq 0,
\end{equation*}%
then the disc $D_{x_{0},\mu }^{S}$ is a fixed disc of $f$.
\end{corollary}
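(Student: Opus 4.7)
The plan is to deduce Corollary \ref{cor1} directly from Theorem \ref{thm4} together with a short extension argument for points lying strictly inside the disc. First observe that the hypotheses of Corollary \ref{cor1} are imposed over the full closed disc $D^{S}_{x_{0},\mu}$, and since $C^{S}_{x_{0},\mu}\subset D^{S}_{x_{0},\mu}$, they in particular imply the hypotheses of Theorem \ref{thm4}. Applying that theorem yields $x_{0}\in Fix(f)$ and $C^{S}_{x_{0},\mu}\subset Fix(f)$. This already settles the trivial case $\mu=0$, in which $D^{S}_{x_{0},\mu}=\{x_{0}\}$.

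For $\mu>0$, it remains to handle an arbitrary interior point $x\in D^{S}_{x_{0},\mu}$ with $\mathcal{S}(x,x,x_{0})<\mu$. I would argue by contradiction: assume $fx\neq x$, so that $\mathcal{S}(fx,fx,x)>0$. Hypothesis $(ii)$ combined with $(\zeta_{2})$ then gives
\begin{equation*}
\mathcal{S}(fx,fx,x)<M_{S}(x,x_{0}).
\end{equation*}
Since $x_{0}$ is a fixed point, the symmetry property \eqref{symmetry property} allows us to expand
\begin{equation*}
M_{S}(x,x_{0})=\max\left\{a\mathcal{S}(x,x,fx),\,(1-a)\mathcal{S}(x,x,fx),\,\frac{\mathcal{S}(x,x,x_{0})+\mathcal{S}(x_{0},x_{0},fx)}{4}\right\}.
\end{equation*}
Hypothesis $(i)$ (applied to $x$, which lies in the disc) forces $\mathcal{S}(fx,fx,x_{0})\leq\mu$, whence by symmetry $\mathcal{S}(x_{0},x_{0},fx)\leq\mu$ as well, and the third term of the maximum is at most $\frac{\mu+\mu}{4}=\frac{\mu}{2}$. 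Running the same chain of estimates as in the proof of Theorem \ref{thm4} leads to $\mathcal{S}(fx,fx,x)<\mu$, which contradicts the definition of the infimum $\mu$ in \eqref{radius}. Therefore $fx=x$, and combining with the boundary case we conclude $D^{S}_{x_{0},\mu}\subset Fix(f)$.

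The main obstacle I anticipate is the verification that the bound on $M_{S}(x,x_{0})$ used in Theorem \ref{thm4} survives when the equality $\mathcal{S}(x,x,x_{0})=\mu$ is relaxed to the inequality $\mathcal{S}(x,x,x_{0})<\mu$. In fact this relaxation only strengthens the bound on the third entry of the maximum, so the overall estimate goes through without modification; one simply has to be careful to keep the two $\mathcal{S}$-terms in that entry separate and to apply hypothesis $(i)$ to each point $x\in D^{S}_{x_{0},\mu}$ rather than only to boundary points, as is now permitted by the strengthened hypotheses of the corollary.
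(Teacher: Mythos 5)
Your proposal is correct and matches the paper's intended route: the paper states this corollary without proof, implicitly as the disc analogue of Theorem \ref{thm4}, and your argument --- invoking Theorem \ref{thm4} for $x_{0}$ and the boundary circle, then rerunning the same contradiction (hypotheses $(i)$, $(ii)$, $(\zeta_{2})$, the symmetry property and the infimum defining $\mu$) at each interior point --- is precisely that analogue. Your closing observation, that relaxing $\mathcal{S}(x,x,x_{0})=\mu$ to $\mathcal{S}(x,x,x_{0})\leq\mu$ only improves the bound on the third entry of $M_{S}(x,x_{0})$ and that hypothesis $(i)$ is now available at every point of the disc, is exactly the point that needed checking, and you check it.
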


\begin{example}
\label{exm23} Let $X=\left\{ -3,-1,1,3,12,18\right\} $ with the $S$-metric
defined in $($\ref{second S metric}$)$ and consider the self-mapping $f$
defined in Example \ref{exm21} on this $S$-metric space $(X,\mathcal{S})$.
It is easy to check that the self-mapping $f$ satisfies the conditions of
Theorem \ref{thms1} with the points $x_{1}=-1$ and $x_{2}=1$, the simulation
function $\zeta (t,s)=\frac{7}{8}s-t$ and any $a\in \left[ 0,1\right) $. We
have%
\begin{equation*}
\mu =\inf \left\{ \mathcal{S}(x,x,fx):x\neq fx,x\in X\right\} =\mathcal{S}%
(12,12,18)=12
\end{equation*}%
and clearly the set $Fix(f)$ contains the ellipse $E_{12}^{S}(-1,1)=\left\{
-3,3\right\} $. On the other hand, the self-mapping $f$ does not satisfy the
condition $(b)$ of Theorem \ref{thms1} for the ellipse $E_{12}^{S}(-3,3)=%
\left\{ -3,-1,1,3\right\} $ for any simulation function $\zeta $ and any $%
a\in \left[ 0,1\right) $. Indeed, we have
\begin{eqnarray*}
\zeta \left( \mathcal{S}(fx,fx,x),M_{S}(x,3)\right) &=&\zeta (\mathcal{S}%
\left( 18,18,12\right) ,M_{S}(12,3)) \\
&=&\zeta (12,12))<0\text{,}
\end{eqnarray*}%
by the condition $(\zeta _{2})$ for the point $x=12$ with $\mathcal{S}%
(18,18,12)>0$. This shows that the converse statement of Theorem \ref{thms1}
is not true everwhen.

The self-mapping $f$ satisfies the conditions of Theorem \ref{thm4} with the
point $x_{0}=-3$ and the simulation function $\zeta (t,s)=\frac{5}{6}s-t$.
The circle $C_{-3,12}^{S}=\left\{ 3\right\} $ is contained in the set $%
Fix(f) $. On the other hand, the self-mapping $f$ does not satisfy the
condition $(ii)$ of Theorem \ref{thm4} with the point $x_{0}=3$ for any
simulation function $\zeta $ and any $a\in \left[ 0,1\right) $. Since we
have
\begin{eqnarray*}
\zeta (\mathcal{S}\left( fx,fx,x\right) ,M_{S}(x,x_{0})) &=&\zeta (\mathcal{S%
}\left( 18,18,12\right) ,M_{S}(12,3)) \\
&=&\zeta (12,12))<0\text{,}
\end{eqnarray*}%
by the condition $(\zeta _{2})$ for the point $x=12$. But, the circle $%
C_{3,12}^{S}=\left\{ -3\right\} $ is a fixed circle of $f$. This shows that
the converse statement of Theorem \ref{thm4} is not true everwhen.
\end{example}

\begin{remark}
$1)$ Example \ref{exm23} shows the importance of the studies on an $S$%
-metric space. Notice that the ellipse $E_{12}(-1,1)$ and the circle $%
C_{-3,12}$ are empty sets in the metric space $\left( X,d\right) $ in
Example \ref{exm21} but, if we consider the $S$-metric $\mathcal{S}%
(x,y,z)=\left\vert x-z\right\vert +\left\vert x+z-2y\right\vert $ on $X$
then the ellipse $E_{12}^{S}(-1,1)$ and the circle $C_{-3,12}^{S}$ are not
empty sets and both of them are contained in the set $Fix(f)$.

$2)$ Note that $S$-metric versions of Definition \ref{def21} and Theorem \ref%
{thm1} can also be given for self-mappings on an $S$-metric space.
\end{remark}

Now we give a fixed-circle theorem using the auxiliary function $\varphi
_{r}:%
\mathbb{R}
^{+}\cup \left\{ 0\right\} \rightarrow
\mathbb{R}
$ defined in (\ref{auxiliary function}).

\begin{theorem}
\label{thm26} $(X,\mathcal{S})$ be an $S$-metric space, $\zeta \in \mathcal{Z%
}$ be a simulation function and $C_{x_{0},r}^{S}$ be any circle on $X$ with $%
r>0$. If there exists a self-mapping $f:X\rightarrow X$ satisfying

$i)$ $\mathcal{S}(fx,fx,x_{0})=r$ for each $x\in C_{x_{0},r}^{S}$,

$ii)$ $\zeta \left( r,\mathcal{S}\left( fx,fx,fy\right) \right) \geq 0$ for
each $x,y\in C_{x_{0},r}^{S}$ with $x\neq y$,

$iii)$ $\zeta \left( \mathcal{S}\left( fx,fx,fy\right) ,\mathcal{S}\left(
x,x,y\right) -\varphi _{r}\left( \mathcal{S}\left( x,x,fx\right) \right)
\right) \geq 0$ for each $x,y\in C_{x_{0},r}^{S}$,

$iv)$ $f$ is one to one on the circle $C_{x_{0},r}^{S}$,

then the circle $C_{x_{0},r}^{S}$ is a fixed circle of $f$.
\end{theorem}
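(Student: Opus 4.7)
The plan is to follow the template of Theorem \ref{thm23}'s proof, replacing $d$ by $\mathcal{S}$ throughout. I would fix an arbitrary $x \in C_{x_0,r}^S$; by hypothesis $(i)$, $fx$ also lies on $C_{x_0,r}^S$. To show $fx = x$, I would assume for contradiction that $fx \neq x$, so that $\mathcal{S}(x,x,fx) > 0$. Since $x$ and $fx$ are now distinct elements of $C_{x_0,r}^S$, injectivity $(iv)$ forces $f^2 x \neq fx$, and hence $\mathcal{S}(fx, fx, f^2 x) > 0$.

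The first contradiction-producing inequality comes from hypothesis $(ii)$ applied to the pair $(x, fx)$: axiom $(\zeta_2)$, whose positivity requirement is met by $r > 0$ and $\mathcal{S}(fx, fx, f^2 x) > 0$, gives
\begin{equation*}
0 \leq \zeta\bigl(r, \mathcal{S}(fx,fx,f^2 x)\bigr) < \mathcal{S}(fx,fx,f^2 x) - r,
\end{equation*}
so $r < \mathcal{S}(fx,fx,f^2 x)$. The decisive move is to evaluate $\varphi_r$ on $\mathcal{S}(x,x,fx)$: because $\mathcal{S}(x,x,fx) > 0$, the definition (\ref{auxiliary function}) of $\varphi_r$ yields $\mathcal{S}(x,x,fx) - \varphi_r(\mathcal{S}(x,x,fx)) = r$, so hypothesis $(iii)$ applied to $(x, fx)$ collapses to $\zeta(\mathcal{S}(fx,fx,f^2 x), r) \geq 0$. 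A second invocation of $(\zeta_2)$ then delivers $\mathcal{S}(fx,fx,f^2 x) < r$, contradicting the previous strict inequality. Hence $fx = x$ for every $x \in C_{x_0,r}^S$, i.e., $C_{x_0,r}^S$ is a fixed circle of $f$.

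The only nontrivial checkpoint in this plan is ensuring that axiom $(\zeta_2)$ may be applied at each step, which requires positivity of both of its arguments. Positivity of $r$ is built into the statement ($r > 0$), while positivity of $\mathcal{S}(fx,fx,f^2 x)$ is exactly where the injectivity condition $(iv)$ earns its keep; without it one could not rule out $f^2 x = fx$. No further use of the symmetry identity (\ref{symmetry property}) is needed, since throughout the argument $\mathcal{S}$ appears only in the ``two-equal-first-coordinates'' form in which the hypotheses are stated.
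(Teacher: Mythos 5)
Your proposal is correct and follows essentially the same argument as the paper's proof: hypothesis $(i)$ keeps $fx$ on the circle, $(ii)$ applied to the pair $(x,fx)$ yields $r<\mathcal{S}(fx,fx,f^{2}x)$, and $(iii)$ together with the identity $\mathcal{S}(x,x,fx)-\varphi_{r}(\mathcal{S}(x,x,fx))=r$ yields the opposite strict inequality, a contradiction. Your explicit verification that $(iv)$ guarantees $\mathcal{S}(fx,fx,f^{2}x)>0$ so that $(\zeta_{2})$ is applicable is a welcome clarification of a step the paper leaves implicit.
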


\begin{proof}
Let $x\in C_{x_{0},r}^{S}$ be an arbitrary point. By the condition $(i)$, we
have $fx\in C_{x_{0},r}^{S}$. To show that $fx=x$ for all $x\in
C_{x_{0},r}^{S}$, conversely, we assume that $x\neq fx$ for any $x\in
C_{x_{0},r}^{S}$. Then we have $\mathcal{S}\left( x,x,fx\right) >0$. Using
the conditions $(ii)$, $(iv)$ and $(\zeta _{2})$, we obtain%
\begin{equation*}
0\leq \zeta \left( r,\mathcal{S}\left( fx,fx,f^{2}x\right) \right) <\mathcal{%
S}\left( fx,fx,f^{2}x\right) -r
\end{equation*}%
and hence%
\begin{equation}
r<\mathcal{S}\left( fx,fx,f^{2}x\right) .  \label{eqn24}
\end{equation}%
Using the definition of the function $\varphi _{r}$ and the conditions $%
(iii) $, $(iv)$ and $(\zeta _{2})$, we get%
\begin{eqnarray*}
0 &\leq &\zeta \left( \mathcal{S}\left( fx,fx,f^{2}x\right) ,\mathcal{S}%
\left( x,x,fx\right) -\varphi _{r}\left( \mathcal{S}\left( x,x,fx\right)
\right) \right) \\
&=&\zeta \left( \mathcal{S}\left( fx,fx,f^{2}x\right) ,\mathcal{S}\left(
x,x,fx\right) -\left( \mathcal{S}\left( x,x,fx\right) -r\right) \right) \\
&=&\zeta \left( \mathcal{S}\left( fx,fx,f^{2}x\right) ,r\right) <r-\mathcal{S%
}\left( fx,fx,f^{2}x\right)
\end{eqnarray*}%
and hence%
\begin{equation*}
\mathcal{S}\left( fx,fx,f^{2}x\right) <r\text{,}
\end{equation*}%
which is a contradiction with the inequality (\ref{eqn24}). Consequently, it
should be $fx=x$ for each $x\in C_{x_{0},r}^{S}$, that is, $C_{x_{0},r}^{S}$
is a fixed circle of $f$.
\end{proof}

Now we give a general result which ensure the uniqueness of a geometric
figure contained in the set $Fix(f)$ for a self-mapping of an $S$-metric
space $(X,\mathcal{S})$.

\begin{theorem}
\label{thm27} $($The uniqueness theorem$)$ Let $(X,\mathcal{S})$ be an $S$%
-metric space, the number $M_{S}\left( x,y\right) $ be defined as in $($\ref%
{the number Ms(x,y)}$)$ and $f:X\rightarrow X$ be a self-mapping. Assume
that the fixed point set $Fix(f)$ contains a geometric figure $\mathcal{F}$.
If there exists a simulation function $\zeta \in \mathcal{Z}$ \ such that
the condition
\begin{equation}
\zeta \left( \mathcal{S}\left( fx,fx,fy\right) ,M_{S}\left( x,y\right)
\right) \geq 0  \label{eqn25}
\end{equation}%
is satisfied by $f$ for all $x\in \mathcal{F}$ and $y\in X-\mathcal{F}$,
then the figure $\mathcal{F}$ is the unique fixed figure of $f$.
\end{theorem}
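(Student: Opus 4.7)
The plan is to mirror the proof of Theorem \ref{thm24}, adapted to the $S$-metric setting, the key change being that the ``average'' term in $M_S(x,y)$ has a denominator $4$ instead of $2$, and that we must invoke the symmetry property \eqref{symmetry property} rather than metric symmetry.

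First I would proceed by contradiction: suppose there exists another fixed figure $\mathcal{F}^{\ast}$ of $f$, and pick arbitrary points $x\in \mathcal{F}$ and $y\in \mathcal{F}^{\ast}$ with $x\neq y$ (so that in particular $y\in X-\mathcal{F}$, which is what the hypothesis \eqref{eqn25} requires). Since $x\in Fix(f)$ and $y\in Fix(f)$, we have $fx=x$ and $fy=y$, hence $\mathcal{S}(x,x,fx)=\mathcal{S}(x,x,x)=0$ and likewise $\mathcal{S}(y,y,fy)=0$. This kills the first two arguments of the maximum in $M_S(x,y)$.

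The core computation is then to evaluate $M_S(x,y)$ on these fixed points. The remaining term of the max is
\[
\frac{\mathcal{S}(x,x,fy)+\mathcal{S}(y,y,fx)}{4}=\frac{\mathcal{S}(x,x,y)+\mathcal{S}(y,y,x)}{4}=\frac{\mathcal{S}(x,x,y)}{2},
\]
where the second equality uses the symmetry property \eqref{symmetry property}. Thus $M_S(x,y)=\tfrac{1}{2}\mathcal{S}(x,x,y)$, while $\mathcal{S}(fx,fx,fy)=\mathcal{S}(x,x,y)$. Since $x\neq y$, the quantity $\mathcal{S}(x,x,y)$ is strictly positive, so both arguments of $\zeta$ lie in $(0,\infty)$ and the condition $(\zeta_2)$ applies.

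Putting everything together, the hypothesis \eqref{eqn25} combined with $(\zeta_2)$ gives
\[
0\leq \zeta\!\left(\mathcal{S}(x,x,y),\tfrac{1}{2}\mathcal{S}(x,x,y)\right)<\tfrac{1}{2}\mathcal{S}(x,x,y)-\mathcal{S}(x,x,y)=-\tfrac{1}{2}\mathcal{S}(x,x,y)<0,
\]
which is the desired contradiction. This forces $x=y$ for every choice of $x\in\mathcal{F}$ and $y\in\mathcal{F}^{\ast}$, whence $\mathcal{F}^{\ast}\subseteq \mathcal{F}$ (and by symmetry of the argument $\mathcal{F}\subseteq\mathcal{F}^{\ast}$), proving uniqueness. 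There is no real obstacle here; the only subtlety to be careful about is the factor of $4$ in the definition of $M_S$, which is what makes $M_S(x,y)<\mathcal{S}(fx,fx,fy)$ on fixed pairs and thereby produces a strict inequality in $(\zeta_2)$ even without needing $\zeta(s,s)<0$ directly.
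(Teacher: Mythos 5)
Your proof is correct and follows essentially the same route as the paper: assume a second fixed figure $\mathcal{F}^{\ast}$, evaluate $M_{S}(x,y)$ at a pair of distinct fixed points $x\in\mathcal{F}$, $y\in\mathcal{F}^{\ast}$, and derive a contradiction from \eqref{eqn25} together with $(\zeta _{2})$. In fact your computation $M_{S}(x,y)=\tfrac{1}{2}\mathcal{S}(x,x,y)$ is the more careful one: the paper writes $\zeta \left( \mathcal{S}(x,x,y),\mathcal{S}(x,x,y)\right) $ at this step, apparently overlooking the denominator $4$ in \eqref{the number Ms(x,y)}, though the contradiction goes through with either value since both arguments of $\zeta $ are positive.
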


\begin{proof}
On the contrary, we suppose that $\mathcal{F}^{\ast }$ is another fixed
figure of the self-mapping $f$. Let $x\in \mathcal{F}$, $y\in \mathcal{F}%
^{\ast }$ with $x\neq y$ be arbitrary points. Using the inequality (\ref%
{eqn25}) and the condition $(\zeta _{2})$, we get%
\begin{equation*}
0\leq \zeta \left( \mathcal{S}\left( fx,fx,fy\right) ,M_{S}\left( x,y\right)
\right) =\zeta \left( \mathcal{S}\left( x,x,y\right) ,\mathcal{S}\left(
x,x,y\right) \right) <\mathcal{S}\left( x,x,y\right) -\mathcal{S}\left(
x,x,y\right) =0\text{,}
\end{equation*}%
a contradiction. Hence, it should be $x=y$ for all $x\in \mathcal{F}$, $y\in
\mathcal{F}^{\ast }$. This shows the uniqueness of the fixed figure $%
\mathcal{F}$ of $f$.
\end{proof}

We give a condition which excludes the identity map $I_{X}:X\rightarrow X$
defined by $I_{X}(x)=x$ for all $x\in X$ from the above results.

\begin{theorem}
\label{thm28} Let $(X,\mathcal{S})$ be an $S$-metric space, $f:X\rightarrow
X $ be a\ self-mapping and $r\in \left[ 0,\infty \right) $ be a fixed
number. If there exists a simulation function $\zeta \in \mathcal{Z}$ \ such
that the condition
\begin{equation*}
\mathcal{S}\left( x,x,fx\right) <\zeta \left( \mathcal{S}\left(
x,x,fx\right) ,\varphi _{r}\left( \mathcal{S}\left( x,x,fx\right) \right)
+r\right)
\end{equation*}%
is satisfied by $f$ for all $x\notin Fix(f)$ if and only if $f=I_{X}$.
\end{theorem}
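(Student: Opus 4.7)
The plan is to adapt the argument of Theorem \ref{thm25} verbatim to the $S$-metric setting, since Theorem \ref{thm28} is the $S$-metric analogue and the key algebraic identity behind $\varphi_r$ is the same in both contexts.

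For the non-trivial direction, I would argue by contradiction. Suppose there exists $x\in X$ with $x\notin Fix(f)$. Then $fx\neq x$, which by property (1) of Definition \ref{def31} forces $\mathcal{S}(x,x,fx)>0$. Unfolding the definition (\ref{auxiliary function}) of the auxiliary function gives the identity
\begin{equation*}
\varphi_{r}\bigl(\mathcal{S}(x,x,fx)\bigr)+r=\bigl(\mathcal{S}(x,x,fx)-r\bigr)+r=\mathcal{S}(x,x,fx),
\end{equation*}
so the hypothesized inequality collapses to
\begin{equation*}
\mathcal{S}(x,x,fx)<\zeta\bigl(\mathcal{S}(x,x,fx),\mathcal{S}(x,x,fx)\bigr).
\end{equation*}
Now apply condition $(\zeta_{2})$ with $t=s=\mathcal{S}(x,x,fx)>0$: the right-hand side is strictly less than $s-t=0$. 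This yields $\mathcal{S}(x,x,fx)<0$, contradicting the non-negativity of $\mathcal{S}$. Therefore no such $x$ can exist, i.e., $Fix(f)=X$ and $f=I_{X}$.

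For the converse, if $f=I_{X}$ then the set $\{x\in X:x\notin Fix(f)\}$ is empty, so the universally quantified condition over that set is vacuously satisfied for every choice of simulation function $\zeta\in\mathcal{Z}$.

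There is no real obstacle in this argument, since it is a direct transcription of the proof of Theorem \ref{thm25} with the metric $d(x,fx)$ replaced by $\mathcal{S}(x,x,fx)$; the only points that need to be checked are that $\mathcal{S}(x,x,fx)=0$ exactly when $x=fx$ (which is guaranteed by Definition \ref{def31}(1)) and that $\varphi_{r}$ is defined only on $\mathbb{R}^{+}\cup\{0\}$, which is compatible with the range of $\mathcal{S}$. No appeal to condition $(\zeta_{3})$ or to the symmetry property (\ref{symmetry property}) is needed.
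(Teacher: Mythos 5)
Your proof is correct and is exactly what the paper intends: the paper's proof of Theorem \ref{thm28} simply states that it is similar to that of Theorem \ref{thm25}, and your argument is precisely that transcription, with the contradiction obtained from $(\zeta_{2})$ applied to $\zeta\bigl(\mathcal{S}(x,x,fx),\mathcal{S}(x,x,fx)\bigr)$ after the $\varphi_{r}$ identity collapses the second argument. Your additional checks (that $\mathcal{S}(x,x,fx)>0$ when $fx\neq x$, via Definition \ref{def31}(1), and the vacuous converse) are accurate and complete the argument.
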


\begin{proof}
The proof is similar to the proof of Theorem \ref{thm25}.
\end{proof}

\begin{remark}
$1)$ Let $(X,\mathcal{S})$ be an $S$-metric space. Suppose that the $S$%
-metric\ $\mathcal{S}$ is generated by a metric $d$. Then, for $0\leq a<1$
we have%
\begin{eqnarray*}
M_{S}(x,x_{0}) &=&\max \left\{
\begin{array}{c}
a\mathcal{S}(x,x,fx)+(1-a)\mathcal{S}(x_{0},x_{0},fx_{0}), \\
(1-a)\mathcal{S}(x,x,fx)+a\mathcal{S}(x_{0},x_{0},fx_{0}),\frac{\mathcal{S}%
(x,x,fx_{0})+\mathcal{S}(x_{0},x_{0},fx)}{4}%
\end{array}%
\right\} \\
&=&\max \left\{
\begin{array}{c}
2ad(x,fx)+2(1-a)d(x_{0},fx_{0}), \\
2(1-a)d(x,fx)+2ad(x_{0},fx_{0}),\frac{d(x,fx_{0})+d(x_{0},fx)}{2}%
\end{array}%
\right\}
\end{eqnarray*}%
and so
\begin{equation*}
M(x,x_{0})\leq M_{S}(x,x_{0})\text{.}
\end{equation*}%
Consequently, Theorem \ref{thm4} $($resp. Corollary \ref{cor1}$)$ is a
generalization of Theorem $3.2$ $($resp. Corollary $3.2)$ given in \cite%
{Pant 2020}.

$2)$ Similar definition of the notion of a fixed figure $($circle, disc,
ellipse and so on$)$ can be given for a self-mapping of a $b$-metric space.

$3)$ From \cite{Sedghi 2014}, we know that the function defined by
\begin{equation*}
d^{S}\left( x,y\right) =S\left( x,x,y\right) =2d\left( x,y\right) \text{,}
\end{equation*}%
for all $x,y\in X$, defines a $b$-metric on an $S$-metric space $(X,\mathcal{%
S})$ with $b=\frac{3}{2}$. If we consider Theorem \ref{thm4} and Theorem \ref%
{thm26} together with this fact, then we get the following fixed-circle $($%
resp. fixed-disc$)$ results on a $b$-metric space using the number%
\begin{equation}
M_{d^{S}}(x,y)=\max \left\{
\begin{array}{c}
ad^{S}(x,fx)+(1-a)d^{S}(y,fy), \\
(1-a)d^{S}(x,fx)+a(1-a)d^{S}(y,fy),\frac{d^{S}(x,fy)+d^{S}(y,fx)}{2}%
\end{array}%
\right\} \text{, }0\leq a<1\text{.}  \label{the number Mds(x,y)}
\end{equation}
\end{remark}

\begin{theorem}
\label{thm5} Let $(X,d^{S})$ be a $b$-metric space, $f:X\rightarrow X$ a
self-mapping and $\mu $ be defined as
\begin{equation}
\mu =\inf \left\{ d^{S}(x,fx):x\neq fx,x\in X\right\} \text{.}
\label{radius2}
\end{equation}%
If there exist a simulation function $\zeta \in \mathcal{Z}$ and a point $%
x_{0}\in X$ such that

$(i)$ For all $x\in C_{x_{0},\mu }^{d^{S}}$, there exists a $\delta (\mu )>0$
satisfying%
\begin{equation*}
\frac{\mu }{4}\leq M_{d^{S}}(x,x_{0})<\frac{\mu }{4}+\delta (\mu
)\Longrightarrow d^{S}(fx,x_{0})\leq \mu \text{,}
\end{equation*}%
where
\begin{equation*}
C_{x_{0},\mu }^{d^{S}}=\left\{ x\in X:d^{S}(x,x_{0})=\mu \right\}
\end{equation*}%
and%
\begin{equation*}
M_{d^{S}}(x,x_{0})=\max \left\{
\begin{array}{c}
ad^{S}(x,fx)+(1-a)d^{S}(x_{0},fx_{0}), \\
(1-a)d^{S}(x,fx)+ad^{S}(x_{0},fx_{0}),\frac{d^{S}(x,fx_{0})+d^{S}(x_{0},fx)}{%
4}%
\end{array}%
\right\} ,\text{ }0\leq a<1,
\end{equation*}

$(ii)$ For all $x\in X,$
\begin{equation*}
d^{S}(fx,x)>0\Longrightarrow \zeta \left(
d^{S}(fx,x),M_{d^{S}}(x,x_{0})\right) \geq 0,
\end{equation*}%
then $x_{0}\in Fix(f)$ and the circle $C_{x_{0},\mu }^{d^{S}}$ is a fixed
circle of $f$.
\end{theorem}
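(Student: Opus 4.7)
The plan is to transpose the argument of Theorem \ref{thm4} from the $S$-metric setting into the $b$-metric setting, since $d^S$ enjoys symmetry (it is a $b$-metric with $b=3/2$) and the proof of Theorem \ref{thm4} never uses the triangle inequality in a way that is sensitive to the $b$-constant. The argument proceeds in two stages: first showing $fx_0=x_0$, and then forcing $fx=x$ for every $x\in C_{x_0,\mu}^{d^S}$.

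For the first stage, I would suppose for contradiction that $fx_0\neq x_0$, so $d^S(x_0,fx_0)>0$. Symmetry of $d^S$ collapses the max defining $M_{d^S}(x_0,x_0)$ to
\[
M_{d^S}(x_0,x_0)=\max\left\{d^S(x_0,fx_0),\,d^S(x_0,fx_0),\,\tfrac{d^S(x_0,fx_0)+d^S(x_0,fx_0)}{4}\right\}=d^S(x_0,fx_0).
\]
Hypothesis $(ii)$ then gives $\zeta(d^S(fx_0,x_0),d^S(x_0,fx_0))\geq 0$, but $(\zeta_2)$ forces the same quantity to be strictly negative. This contradiction yields $fx_0=x_0$.

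In the second stage, if $\mu=0$ then $C_{x_0,\mu}^{d^S}=\{x_0\}$, which we have just shown is fixed. Assume $\mu>0$ and take an arbitrary $x\in C_{x_0,\mu}^{d^S}$ with the aim of deriving $fx=x$. Suppose to the contrary $fx\neq x$; then $d^S(x,fx)\geq \mu$ by the very definition of $\mu$ in \eqref{radius2}. Using $fx_0=x_0$ and $d^S(x,x_0)=\mu$, the expression for $M_{d^S}(x,x_0)$ simplifies to
\[
M_{d^S}(x,x_0)=\max\left\{a\,d^S(x,fx),\,(1-a)\,d^S(x,fx),\,\tfrac{\mu+d^S(x_0,fx)}{4}\right\}.
\]
Hypothesis $(i)$ applied in the regime where $M_{d^S}(x,x_0)$ lies in $[\mu/4,\mu/4+\delta(\mu))$ yields $d^S(fx,x_0)\leq \mu$, and combining this with $(ii)$ and $(\zeta_2)$ I expect to chain
\[
0\leq \zeta(d^S(fx,x),M_{d^S}(x,x_0))<M_{d^S}(x,x_0)-d^S(fx,x)<\mu-d^S(fx,x),
\]
which contradicts $d^S(fx,x)\geq\mu$. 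Therefore $fx=x$, and the circle $C_{x_0,\mu}^{d^S}$ is contained in $Fix(f)$.

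The main obstacle, inherited directly from the proof of Theorem \ref{thm4}, is the last chain of inequalities: one must justify the strict bound $M_{d^S}(x,x_0)<\mu$ from the partial conclusion of $(i)$, even though the terms $a\,d^S(x,fx)$ and $(1-a)\,d^S(x,fx)$ entering the max are a priori not small. The bookkeeping mimics the $S$-metric version verbatim, exploiting $0\leq a<1$, the equalities $d^S(x,x_0)=\mu$ and $d^S(x_0,fx_0)=0$, and the symmetry $d^S(x_0,fx)=d^S(fx,x_0)$ furnished by the $b$-metric structure.
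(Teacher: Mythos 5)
Your route is exactly the paper's: the paper gives no independent proof of Theorem \ref{thm5}, obtaining it from Theorem \ref{thm4} via the identification $d^{S}(x,y)=\mathcal{S}(x,x,y)$, and your two-stage argument (first $fx_{0}=x_{0}$ from hypothesis $(ii)$ and $(\zeta _{2})$ applied to $M_{d^{S}}(x_{0},x_{0})=d^{S}(x_{0},fx_{0})$, then a contradiction with the definition of $\mu$ for $x\in C_{x_{0},\mu }^{d^{S}}$ with $fx\neq x$) is a faithful transliteration of the proof of Theorem \ref{thm4}.

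The obstacle you flag at the end, however, is a genuine gap, and the paper does not resolve it either: in the proof of Theorem \ref{thm4} the chain $0<\zeta \left( \mathcal{S}(fx,fx,x),M_{S}(x,x_{0})\right) <M_{S}(x,x_{0})-\mathcal{S}(fx,fx,x)<\mu -\mathcal{S}(fx,fx,x)$ silently uses $M_{S}(x,x_{0})<\mu $, which does not follow from the hypotheses. Concretely, when $fx\neq x$ one has $d^{S}(x,fx)\geq \mu $, so $M_{d^{S}}(x,x_{0})\geq \max \{a,1-a\}\,d^{S}(x,fx)\geq \tfrac{\mu }{2}$, and for $a=0$ (or whenever $d^{S}(x,fx)\geq 2\mu $) the first two entries of the max already reach $d^{S}(x,fx)\geq \mu $, so that $M_{d^{S}}(x,x_{0})-d^{S}(fx,x)\geq 0$ and the intended contradiction evaporates. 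Hypothesis $(i)$ cannot rescue this: its conclusion bounds only $d^{S}(fx,x_{0})$, hence only the third entry $\tfrac{\mu +d^{S}(x_{0},fx)}{4}\leq \tfrac{\mu }{2}$, and even that is available only when the antecedent $\tfrac{\mu }{4}\leq M_{d^{S}}(x,x_{0})<\tfrac{\mu }{4}+\delta (\mu )$ happens to hold, which is never verified (and is incompatible with $M_{d^{S}}(x,x_{0})\geq \tfrac{\mu }{2}$ unless $\delta (\mu )>\tfrac{\mu }{4}$). So your write-up matches the paper's argument, gap included; a genuine repair would require either restricting $a$ and additionally bounding $d^{S}(x,fx)$ from above by something below $\mu /\max \{a,1-a\}$, or reformulating $(i)$ so that its conclusion controls $M_{d^{S}}(x,x_{0})$ itself rather than $d^{S}(fx,x_{0})$.
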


\begin{corollary}
\label{cor2} Let $(X,d^{S})$ be a $b$-metric space, $f:X\rightarrow X$ be a
self-mapping and the number $\mu $ be defined as in $($\ref{radius2}$)$. If
there exist a simulation function $\zeta \in \mathcal{Z}$ and a point $%
x_{0}\in X$ such that

$(i)$ For all $x\in D_{x_{0},\mu }^{d^{S}}$, there exists a $\delta (\mu )>0$
satisfying%
\begin{equation*}
\frac{\mu }{4}\leq M_{d^{S}}(x,x_{0})<\frac{\mu }{4}+\delta (\mu
)\Longrightarrow d^{S}(fx,x_{0})\leq \mu \text{,}
\end{equation*}%
where%
\begin{equation*}
D_{x_{0},\mu }^{d^{S}}=\left\{ x\in X:d^{S}(x,x_{0})\leq \mu \right\} \text{,%
}
\end{equation*}

$(ii)$ For all $x\in X$,%
\begin{equation*}
d^{S}(fx,x)>0\Longrightarrow \zeta \left(
d^{S}(fx,x),M_{d^{S}}(x,x_{0})\right) \geq 0,
\end{equation*}%
then the disc $D_{x_{0},\mu }^{d^{S}}$ is a fixed disc of $f$.
\end{corollary}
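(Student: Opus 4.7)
The plan is to derive Corollary \ref{cor2} from Theorem \ref{thm5} and then extend the conclusion from the circle to the whole disc by a contradiction argument that is structurally identical to the one used for the $S$-metric fixed-disc Corollary \ref{cor1}. Observe first that the circle $C_{x_0,\mu}^{d^S}$ is contained in the disc $D_{x_0,\mu}^{d^S}$, so hypothesis $(i)$ of Corollary \ref{cor2}, stated for every $x \in D_{x_0,\mu}^{d^S}$, in particular implies hypothesis $(i)$ of Theorem \ref{thm5}; hypothesis $(ii)$ is literally the same. Applying Theorem \ref{thm5} therefore yields at once that $x_0 \in \mathrm{Fix}(f)$ and that the entire circle $C_{x_0,\mu}^{d^S}$ lies in $\mathrm{Fix}(f)$.

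To upgrade this to the disc, I would treat the strictly interior points, those $x \in D_{x_0,\mu}^{d^S}$ with $d^S(x,x_0) < \mu$. Assume, toward contradiction, that such a point satisfies $fx \neq x$. Then $d^S(fx,x) > 0$ and, by the very definition of $\mu$ in \eqref{radius2}, $d^S(fx,x) \geq \mu$. Applying hypothesis $(ii)$ together with the property $(\zeta_2)$ of a simulation function gives
\begin{equation*}
0 \leq \zeta\!\left(d^S(fx,x), M_{d^S}(x,x_0)\right) < M_{d^S}(x,x_0) - d^S(fx,x),
\end{equation*}
so that $\mu \leq d^S(fx,x) < M_{d^S}(x,x_0)$.

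The remaining work is to show that $M_{d^S}(x,x_0) \leq \mu$, which will contradict the previous inequality. Since $fx_0 = x_0$ we have $d^S(x_0,fx_0)=0$, which collapses the first two entries of the maximum defining $M_{d^S}(x,x_0)$ to $a d^S(x,fx)$ and $(1-a) d^S(x,fx)$, while the third entry becomes $\bigl(d^S(x,x_0)+d^S(x_0,fx)\bigr)/4$. Condition $(i)$, applicable because $x \in D_{x_0,\mu}^{d^S}$, supplies the bound $d^S(fx,x_0) \leq \mu$ in the pertinent range of $M_{d^S}(x,x_0)$, and combined with $d^S(x,x_0) \leq \mu$ this controls the third entry by $\mu/2$. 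The first two entries are handled by the coefficient $a < 1$ (so $\max\{a,1-a\}<1$) exactly as in the proofs of Theorem \ref{thm4} and Corollary \ref{cor1}. This yields $M_{d^S}(x,x_0) \leq \mu$, the desired contradiction, forcing $fx=x$ and thus $D_{x_0,\mu}^{d^S} \subset \mathrm{Fix}(f)$.

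The main obstacle I anticipate is purely bookkeeping: verifying that the hypothesis range $\mu/4 \leq M_{d^S}(x,x_0) < \mu/4 + \delta(\mu)$ in $(i)$ is indeed realized at every interior $x$ under consideration, and handling uniformly the subcases coming from which of the three arguments attains the maximum in $M_{d^S}(x,x_0)$. Since this has been carried out in the analogous $S$-metric situation (Theorem \ref{thm4}), the passage to the $b$-metric $d^S$ is routine and no new idea beyond the one already established for Theorem \ref{thm5} is required.
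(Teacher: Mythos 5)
The paper offers no proof of Corollary \ref{cor2} at all: it is stated as the $b$-metric transcription of Corollary \ref{cor1}, whose intended justification is to rerun the proof of Theorem \ref{thm4} with the disc in place of the circle. Your overall plan (invoke Theorem \ref{thm5} to fix $x_{0}$ and the boundary circle, then argue by contradiction at interior points using $(\zeta_{2})$ and the definition of $\mu$) is therefore the right shape, and the first two steps --- $d^{S}(fx,x)\geq \mu$ and $\mu \leq d^{S}(fx,x) < M_{d^{S}}(x,x_{0})$ --- are sound.

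The gap is in the final step, where you claim $M_{d^{S}}(x,x_{0})\leq \mu$. First, $\max\{a,1-a\}<1$ is false when $a=0$; more seriously, even for $0<a<1$ the entries $a\,d^{S}(x,fx)$ and $(1-a)\,d^{S}(x,fx)$ need not be $\leq\mu$, since $d^{S}(x,fx)$ is only bounded \emph{below} by $\mu$ and can be arbitrarily large. So the inequality $M_{d^{S}}(x,x_{0})\leq\mu$ is simply not available, and the contradiction you aim for does not materialize when the maximum is attained at one of the first two entries. The correct way to dispatch that subcase is different: there one gets $d^{S}(fx,x)<\max\{a,1-a\}\,d^{S}(x,fx)\leq d^{S}(x,fx)$, a contradiction in itself. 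Second, your use of condition $(i)$ to bound the third entry presupposes its antecedent $\frac{\mu}{4}\leq M_{d^{S}}(x,x_{0})<\frac{\mu}{4}+\delta(\mu)$, which you defer as ``bookkeeping''; but this antecedent can genuinely fail (precisely when $M_{d^{S}}(x,x_{0})$ is large), in which case $(i)$ is vacuous and gives no bound on $d^{S}(fx,x_{0})$. To be fair, the paper's own proof of the analogous Theorem \ref{thm4} asserts $M_{S}(x,x_{0})<\mu$ with the same missing justification, so you have reproduced rather than introduced this weakness --- but as a standalone argument your proposal does not close, and you should restructure the case analysis on which entry attains the maximum rather than aim for a uniform bound $M_{d^{S}}(x,x_{0})\leq\mu$.
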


\begin{theorem}
\label{thm29} Let $(X,d^{S})$ be a $b$-metric space, $\zeta \in \mathcal{Z}$
be a simulation function and $C_{x_{0},r}^{d^{S}}$ be any circle on $X$ with
$r>0$. If there exists a self-mapping $f:X\rightarrow X$ satisfying

$i)$ $d^{S}(fx,x_{0})=r$ for each $x\in C_{x_{0},r}^{d^{S}}$,

$ii)$ $\zeta \left( r,d^{S}(fx,fy\right) )\geq 0$ for each $x,y\in
C_{x_{0},r}^{d^{S}}$ with $x\neq y$,

$iii)$ $\zeta \left( d^{S}(fx,fy),d^{S}(x,y)-\varphi _{r}\left(
d^{S}(x,fx\right) \right) )\geq 0$ for each $x,y\in C_{x_{0},r}^{d^{S}}$,

$iv)$ $f$ is one to one on the circle $C_{x_{0},r}^{d^{S}}$,

then the circle $C_{x_{0},r}^{d^{S}}$ is a fixed circle of $f$.
\end{theorem}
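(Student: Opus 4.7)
The plan is to mimic the proofs of Theorem \ref{thm23} and Theorem \ref{thm26} essentially verbatim, since the argument uses nothing about $d^{S}$ beyond the fact that it takes nonnegative real values. In particular, the $b$-metric relaxation of the triangle inequality plays no role; only the sign information coming from properties $(\zeta_{2})$ and the definition of $\varphi_{r}$ is used.

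First I would fix an arbitrary $x\in C_{x_{0},r}^{d^{S}}$ and use $(i)$ to conclude $fx\in C_{x_{0},r}^{d^{S}}$. Then I would proceed by contradiction, assuming $fx\neq x$, so that $d^{S}(x,fx)>0$ and hence (by $(i)$ applied to $fx$) $f^{2}x$ also lies on the circle. By the one-to-one hypothesis $(iv)$ together with $x\neq fx$, we obtain $fx\neq f^{2}x$, which makes the pair $(fx,f^{2}x)$ eligible for condition $(ii)$.

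Next I would apply $(ii)$ with the pair $(fx,f^{2}x)$ and invoke $(\zeta_{2})$ to deduce the lower bound
\begin{equation*}
r<d^{S}(fx,f^{2}x).
\end{equation*}
Then I would apply $(iii)$ to the same pair, and use that $d^{S}(x,fx)>0$ to evaluate the auxiliary function as $\varphi_{r}(d^{S}(x,fx))=d^{S}(x,fx)-r$; substituting this into the second argument of $\zeta$ collapses it to $r$. Invoking $(\zeta_{2})$ once more yields
\begin{equation*}
d^{S}(fx,f^{2}x)<r,
\end{equation*}
contradicting the previous inequality. Therefore $fx=x$, and since $x$ was arbitrary on $C_{x_{0},r}^{d^{S}}$, the circle is fixed by $f$.

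There is really no substantive obstacle here: the calculation is identical in structure to Theorem \ref{thm23} and Theorem \ref{thm26}, with $d$ (resp.\ $\mathcal{S}(\cdot,\cdot,\cdot)$) replaced by $d^{S}$. The only point that deserves a quick check is that $fx\in C_{x_{0},r}^{d^{S}}$ implies $d^{S}(fx,f^{2}x)$ can legitimately be fed into conditions $(ii)$ and $(iii)$, which is immediate from $(i)$ applied to both $x$ and $fx$ together with $(iv)$ to ensure the strict inequality $fx\neq f^{2}x$ required by $(ii)$. No use of the $b$-metric constant $b=\tfrac{3}{2}$ is needed, which mirrors the fact that the metric and $S$-metric versions also relied only on $(\zeta_{2})$ and the explicit form of $\varphi_{r}$.
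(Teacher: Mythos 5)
Your proof is correct and is exactly the argument the paper intends (the paper gives no separate proof of Theorem \ref{thm29}, stating only that it follows from Theorems \ref{thm4} and \ref{thm26}, and your argument is the proof of Theorem \ref{thm23}/\ref{thm26} with $d$ resp.\ $\mathcal{S}$ replaced by $d^{S}$). One small wording fix: condition $(iii)$ must be applied to the pair $(x,fx)$, not to ``the same pair'' $(fx,f^{2}x)$, so that its first argument is $d^{S}(fx,f^{2}x)$ and its second argument collapses to $r$ --- which is evidently what your computation actually does.
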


\begin{theorem}
\label{thm30} Let $(X,d^{S})$ be a $b$-metric space, the number $%
M_{d^{S}}(x,y)$ be defined as in $($\ref{the number Mds(x,y)}$)$ and $%
f:X\rightarrow X$ be a self-mapping. Assume that the fixed point set $Fix(f)$
contains a geometric figure $\mathcal{F}$. If there exists a simulation
function $\zeta \in \mathcal{Z}$ \ such that the condition
\begin{equation*}
\zeta \left( d^{S}(fx,fy),M_{d^{S}}\left( x,y\right) \right) \geq 0
\end{equation*}%
is satisfied by $f$ for all $x\in \mathcal{F}$ and $y\in X-\mathcal{F}$,
then the figure $\mathcal{F}$ is the unique fixed figure of $f$.
\end{theorem}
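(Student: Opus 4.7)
My plan is to mirror the contradiction arguments used in the earlier uniqueness theorems (Theorem \ref{thm24} in the metric setting and Theorem \ref{thm27} in the $S$-metric setting), replacing $d$ or $\mathcal{S}$ by $d^{S}$ and $M$ or $M_{S}$ by $M_{d^{S}}$. The key observation is that the auxiliary quantity $M_{d^{S}}(x,y)$ collapses to $d^{S}(x,y)$ whenever $x$ and $y$ are both fixed by $f$, which then forces an immediate contradiction via property $(\zeta_{2})$.

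First, I would assume, towards a contradiction, that there is another fixed figure $\mathcal{F}^{\ast }$ of $f$ distinct from $\mathcal{F}$, so that we may pick $x\in \mathcal{F}$ and $y\in \mathcal{F}^{\ast }\setminus \mathcal{F}$ with $x\neq y$. In particular, $y\in X-\mathcal{F}$, which is exactly the regime in which the hypothesis $\zeta(d^{S}(fx,fy),M_{d^{S}}(x,y))\geq 0$ is available. Since $fx=x$ and $fy=y$, we have $d^{S}(fx,fy)=d^{S}(x,y)$, $d^{S}(x,fx)=d^{S}(y,fy)=0$, and the symmetry of the $b$-metric yields $d^{S}(x,fy)=d^{S}(y,fx)=d^{S}(x,y)$.

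Substituting these into the definition (\ref{the number Mds(x,y)}) of $M_{d^{S}}(x,y)$, the first two entries in the maximum vanish, while the third becomes $\frac{d^{S}(x,y)+d^{S}(x,y)}{2}=d^{S}(x,y)$. Hence $M_{d^{S}}(x,y)=d^{S}(x,y)$. The hypothesis together with $(\zeta_{2})$ then gives
\[
0\leq \zeta\bigl(d^{S}(fx,fy),M_{d^{S}}(x,y)\bigr)=\zeta\bigl(d^{S}(x,y),d^{S}(x,y)\bigr)<d^{S}(x,y)-d^{S}(x,y)=0,
\]
which is the desired contradiction. Consequently no such $y$ exists, so $\mathcal{F}^{\ast }\subseteq \mathcal{F}$, and since this argument is symmetric (swapping the roles of $\mathcal{F}$ and $\mathcal{F}^{\ast }$), we conclude $\mathcal{F}=\mathcal{F}^{\ast }$.

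I do not anticipate any serious obstacle here: the proof is structurally identical to Theorems \ref{thm24} and \ref{thm27}, and the only step that requires attention is verifying that the symmetry of $d^{S}$ (inherited from the $b$-metric axioms) really does reduce $M_{d^{S}}(x,y)$ to $d^{S}(x,y)$ when $x,y\in Fix(f)$. One minor cosmetic care is to state at the outset that we choose $y\in \mathcal{F}^{\ast }\setminus \mathcal{F}$ so that the hypothesis on pairs $(x,y)\in \mathcal{F}\times (X-\mathcal{F})$ legitimately applies; otherwise $\mathcal{F}^{\ast }\subseteq \mathcal{F}$ and there is nothing to prove for that inclusion.
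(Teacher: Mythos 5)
Your proof is correct and follows exactly the argument the paper gives for the metric and $S$-metric versions (Theorems \ref{thm24} and \ref{thm27}); the paper states Theorem \ref{thm30} without proof precisely because it is this verbatim transcription, and your verification that $M_{d^{S}}(x,y)=d^{S}(x,y)$ when $x,y\in Fix(f)$ is the same key step. The only (harmless) overstatement is the closing appeal to ``symmetry'' for the reverse inclusion --- the hypothesis is not symmetric in $\mathcal{F}$ and $\mathcal{F}^{\ast}$ --- but what you have already established (that no fixed figure $\mathcal{F}^{\ast}$ has a point outside $\mathcal{F}$, i.e. $Fix(f)=\mathcal{F}$) is exactly the uniqueness assertion intended, and is in fact phrased more carefully than in the paper's own proofs, where the restriction $y\in X-\mathcal{F}$ needed to invoke the contractive condition is glossed over.
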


\begin{theorem}
\label{thm31} Let $(X,d^{S})$ be a $b$-metric space, $f:X\rightarrow X$ be
a\ self-mapping with the fixed point set $Fix(f)$ and $r\in \left[ 0,\infty
\right) $ be a fixed number. If there exists a simulation function $\zeta
\in \mathcal{Z}$ \ such that the condition
\begin{equation*}
d^{S}(x,fx)<\zeta \left( d^{S}(x,fx),\varphi _{r}\left( d^{S}(x,fx)\right)
+r\right)
\end{equation*}%
is satisfied by $f$ for all $x\notin Fix(f)$ if and only if $f=I_{X}$.
\end{theorem}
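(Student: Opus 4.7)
The plan is to mimic the proof of Theorem \ref{thm25} essentially verbatim, since the only change is that the metric $d$ is replaced by the $b$-metric $d^{S}$, and no properties of the metric beyond non-negativity of distances are used in that argument.

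First I would handle the forward direction (the condition forces $f=I_{X}$) by contradiction. Suppose there exists some $x\in X$ with $x\notin Fix(f)$, so that $d^{S}(x,fx)>0$. By the definition of the auxiliary function $\varphi_{r}$ in (\ref{auxiliary function}), whenever $d^{S}(x,fx)>0$ we have
\begin{equation*}
\varphi_{r}\!\left(d^{S}(x,fx)\right)+r \;=\; \left(d^{S}(x,fx)-r\right)+r \;=\; d^{S}(x,fx).
\end{equation*}
Substituting into the hypothesis yields
\begin{equation*}
d^{S}(x,fx) \;<\; \zeta\!\left(d^{S}(x,fx),\,d^{S}(x,fx)\right).
\end{equation*}
Now I would invoke condition $(\zeta_{2})$ with $t=s=d^{S}(x,fx)>0$, which gives $\zeta\!\left(d^{S}(x,fx),d^{S}(x,fx)\right)<d^{S}(x,fx)-d^{S}(x,fx)=0$. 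Combining the two inequalities produces $d^{S}(x,fx)<0$, a contradiction with the non-negativity of $d^{S}$. Hence no such $x$ exists, $Fix(f)=X$, and $f=I_{X}$.

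For the converse, I would simply observe that if $f=I_{X}$, then $Fix(f)=X$, so the set of $x\notin Fix(f)$ is empty and the stated condition holds vacuously for any $\zeta\in\mathcal{Z}$.

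No real obstacle arises here; the argument is a direct transcription of the proof of Theorem \ref{thm25}, using only $(\zeta_{2})$, the piecewise definition of $\varphi_{r}$, and the fact that $d^{S}$ takes non-negative values. In particular, the relaxed triangle inequality of a $b$-metric plays no role, and no appeal to the constant $b=\tfrac{3}{2}$ (from the generation of $d^{S}$ by $\mathcal{S}$) is needed. As the author already indicates for Theorem \ref{thm28}, the proof is identical in structure to that of Theorem \ref{thm25}, and it is therefore natural to simply state that the proof follows by the same argument.
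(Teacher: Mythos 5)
Your proof is correct and is exactly the intended argument: the paper leaves Theorem \ref{thm31} unproved (as with the other $b$-metric statements) precisely because it follows verbatim from the proof of Theorem \ref{thm25} with $d$ replaced by $d^{S}$, which is what you carry out, using only $(\zeta_{2})$, the definition of $\varphi_{r}$, and non-negativity of $d^{S}$. Your explicit treatment of the vacuous converse direction matches the paper's remark that $I_{X}$ satisfies the hypothesis for any $\zeta\in\mathcal{Z}$.
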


\section{\textbf{Conclusions and Future Works}}

In this paper, we have obtained new results on the study of geometric
properties of the fixed point set of a self-mapping on a metric (resp. $S$%
-metric,$\ b$-metric) space via the properties of the set of simulation
functions. As a future work, the determination of new conditions which
ensure a geometric figure to be fixed by a self-mapping can be considered
using similar approaches. Further possible applications of our theoretic
results can be done on the applied sciences using the geometric properties
of fixed points. For example, in \cite{Li}, the existence of a fixed point
for every recurrent neural network was shown using Brouwer's Fixed Point
Theorem and a geometric approach was used to locate where the fixed points
are (see \cite{Li} for more details). Therefore, theoretic fixed figure
results are important in the study of neural networks.




\begin{thebibliography}{99}
\bibitem{Aydi 2019} H. Aydi, N. Ta\c{s}, N. Y. \"{O}zg\"{u}r and N. Mlaiki,
Fixed-discs in rectangular metric spaces, Symmetry 11 (2019), no. 2, 294.

\bibitem{Chanda 2019} A. Chanda, D. Dey, K. Lakshmi and S. Radenovi\'{c},
Simulation functions: a survey of recent results, Rev. R. Acad. Cienc.
Exactas F\'{\i}s. Nat. Ser. A Mat. RACSAM 113 (2019), no. 3, 2923-2957.

\bibitem{Ercinar} G. Z. Er\c{c}\i nar, Some geometric properties of fixed
points, Ph.D. Thesis, Eski\c{s}ehir Osmangazi University, 2020.

\bibitem{Hieu 2015} N. T. Hieu, N. T. Thanh Ly and N. V. Dung, A
generalization of \'{C}iri\'{c} quasi-contractions for maps on $S$-metric
spaces. Thai J. Math. 13 (2015), no. 2, 369-380.

\bibitem{Joshi} M. Joshi, A. Tomar and S. K. Padaliya, Fixed Point to Fixed
Ellipse in Metric Spaces and Discontinuous Activation Function, To appear in
Applied Mathematics E-Notes.

\bibitem{Karapinar 2017} E. Karap\i nar and F. Khojasteh, An approach to
best proximity points results via simulation functions, J. Fixed Point
Theory Appl. 19 (2017), no. 3, 1983-1995.

\bibitem{Khojasteh} F. Khojasteh, S. Shukla and S. Radenovi\'{c}, A new
approach to the study of fixed point theory for simulation functions,
Filomat 29 (6) (2015), 1189-1194.

\bibitem{Kostic 2019} A. Kosti\'{c}, V. Rako\v{c}evi\'{c} and S. Radenovi%
\'{c}, Best proximity points involving simulation functions with $w_{0}$%
-distance, Rev. R. Acad. Cienc. Exactas F\'{\i}s. Nat. Ser. A Mat. RACSAM
113 (2019), no. 2, 715-727.

\bibitem{Li} L. K. Li, Fixed point analysis for discrete-time recurrent
neural networks, In: Proceedings 1992 IJCNN International Joint Conference
on Neural Networks, Baltimore, MD, USA, 1992, pp. 134-139 vol.4, doi:
10.1109/IJCNN.1992.227277.

\bibitem{Mlaiki 2018} N. Mlaiki, U. \c{C}elik, N. Ta\c{s}, N. \"{O}zg\"{u}r
and A. Mukheimer, Wardowski type contractions and the fixed-circle problem
on $S$-metric spaces, J. Math. 2018, Art. ID 9127486, 9 pp.

\bibitem{Mlaiki 2019} N. Mlaiki, N. Y. \"{O}zg\"{u}r and N. Ta\c{s}, New
fixed-point theorems on an $S$-metric space via simulation functions,
Mathematics, 7 (2019), no. 7, 583.

\bibitem{Mlaiki arxiv} N. Mlaiki, N. \"{O}zg\"{u}r and N. Ta\c{s}, New
fixed-circle results related to $F_{c}$-contractive and $F_{c}$-expanding
mappings on metric spaces, arXiv:2101.10770.

\bibitem{Olgun 2016} M. Olgun, \"{O}. Bi\c{c}er and T. Aly\i ld\i z, A new
aspect to Picard operators with simulation functions, Turkish J. Math. 40
(2016), no. 4, 832-837.

\bibitem{Ozgur 2017} N. Y. \"{O}zg\"{u}r, N. Ta\c{s}, Some new contractive
mappings on $S$-metric spaces and their relationships with the mapping ($S25$%
), Math. Sci. (Springer) 11 (2017), no. 1, 7-16.

\bibitem{Ozgur-Aip} N. Y. \"{O}zg\"{u}r and N. Ta\c{s}, Some fixed-circle
theorems and discontinuity at fixed circle, AIP Conference Proceedings 1926,
020048 (2018).

\bibitem{Ozgur-Tas-circle-thesis} N. Y. \"{O}zg\"{u}r and N. Ta\c{s},
Fixed-circle problem on $S$-metric spaces with a geometric viewpoint, Facta
Universitatis. Series: Mathematics and Informatics 34 (2019), no. 3, 459-472.

\bibitem{Ozgur-malaysian} N. Y. \"{O}zg\"{u}r and N. Ta\c{s}, Some
fixed-circle theorems on metric spaces, Bull. Malays. Math. Sci. Soc. 42
(2019), no. 4, 1433-1449.

\bibitem{Ozgur-simulation} N. \"{O}zg\"{u}r, Fixed-disc results via
simulation functions, Turkish J. Math. 43 (2019), no. 6, 2794-2805.

\bibitem{Pant Ozgur Tas 2019} R. P. Pant, N. Y. \"{O}zg\"{u}r and N. Ta\c{s}%
, On discontinuity problem at fixed point, Bull. Malays. Math. Sci. Soc. 43
(2020), no. 1, 499-517.

\bibitem{Pant Ozgur Tas accepted} R. P. Pant, N. Y. \"{O}zg\"{u}r and N. Ta%
\c{s}, Discontinuity at fixed points with applications, Bull. Belg. Math.
Soc. Simon Stevin 26 (2019), no. 4, 571-589.

\bibitem{Pant 2020} R. P. Pant, N. \"{O}zg\"{u}r, N. Ta\c{s}, A. Pant and M.
Joshi, New results on discontinuity at fixed point, J. Fixed Point Theory
Appl. 22 (2020), no. 2, 39.

\bibitem{Roldan} A. F. Rold\'{a}n-L\'{o}pez-de-Hierro, E. Karap\i nar, C.
Rold\'{a}n-L\'{o}pez-de-Hierro and J. Mart\'{\i}nez-Moreno, Coincidence
point theorems on metric spaces via simulation functions, J. Comput. Appl.
Math. 275 (2015), 345-355.

\bibitem{Roldan 2015} A. F. Rold\'{a}n L\'{o}pez de Hierro and N. Shahzad,
New fixed point theorem under $R$-contractions, Fixed Point Theory Appl.
2015, 2015:98, 18 pp.

\bibitem{Tas-Ozgur-Mlaiki-2} N. Ta\c{s}, N. Y. \"{O}zg\"{u}r and N. Mlaiki,
New types of $F_{c}$-contractions and the fixed-circle problem, Mathematics
6 (2018), 188.

\bibitem{Ramirez} J. L. Ramirez,  Inversions in an ellipse, Forum Geom. 14 (2014), 107-115.

\bibitem{Sedghi 2012} S. Sedghi, N. Shobe, A. Aliouche, A generalization of
fixed point theorems in $S$-metric spaces, Mat. Vesnik 64 (2012), no. 3, 258-266.

\bibitem{Sedghi 2014} S. Sedghi, N. V. Dung, Fixed point theorems on $S$%
-metric spaces, Mat. Vesnik, 66 (2014), no. 1, 113-124.

\bibitem{Tas 2020} N. Ta\c{s}, Bilateral-type solutions to the fixed-circle
problem with rectified linear units application, Turkish J. Math. 44 (2020), no. 4,  1330-1344.
\end{thebibliography}
\end{document}